 \numberwithin{equation}{section}
\DeclareMathSymbol{\minus} {\mathord}{operators}{"2D} %
\theoremstyle{plain}
\newtheorem{theorem}{Theorem}[section]
\newtheorem{maintheorem}[theorem]{Main Theorem}
\newtheorem{lem}[theorem]{Lemma}
\newtheorem{cor}[theorem]{Corollary}
\theoremstyle{definition}
\newtheorem{df}[theorem]{Definition}
\newtheorem{remark}[theorem]{Remark}
\newtheorem{example}[theorem]{Example}
\def \R {\mathbb{R}}
\def \C {\mathbb{C}}
\def \Q {\mathbb{Q}}
\def \H {\mathbb{H}}
\def \Z {\mathbb{Z}}
\def \D {\mathbb{D}}
\def \M {\mathcal M}
\def \CN {\mathbb N}
\def \CO {\mathcal O}
\def \CR {\mathcal R}
\def \CS {\mathcal S}
\def \CW {\mathcal W}
\def \rk {{\mbox rank}}
\begin{document}

\title[Head and Tail for Adequate Knots]{The Head and Tail of the Colored Jones Polynomial for Adequate Knots}
\date{\today}

\author[C. Armond]{Cody Armond}
\address{Department of Mathematics, University of Iowa,
Iowa City, IA 52242-1419, USA}
\email{cody-armond@uiowa.edu}

\author[O. T. Dasbach]{Oliver T. Dasbach}
\address{Department of Mathematics, Louisiana State University,
Baton Rouge, LA 70803, USA}
\email{kasten@math.lsu.edu}
\thanks {The second author was supported in part by NSF grant DMS-1317942. The first author was partially supported as a graduate student by NSF VIGRE grant DMS 0739382.}

\begin{abstract}
We show that the head and tail functions of the colored Jones polynomial of adequate links are the product of head and tail functions of the colored Jones polynomial of alternating links that can be read-off an adequate diagram of the link. We apply this to strengthen a theorem of Kalfagianni, Futer and Purcell on the fiberedness of adequate links.
\end {abstract}
 
\maketitle

\def\kbsm#1{\mathscr{S}_K(#1)}
\def\sgn#1;#2{\mathbb{S}_{#1,#2}} 
\def\mcg#1;#2{\Gamma_{#1,#2}} 
\def\fg#1;#2{\Pi_{#1,#2}}
\def\tb#1;#2{\mathscr{K}_{\frac{#1}{#2}}}
\def\periph{(\mathcal{\mu},\mathcal{\lambda})}
\def\ext#1{\mathscr{E}(\mathscr{#1})}
\def \qP #1 #2 #3 {(#1;#2)_{#3}}
 
\newcommand{\E}{\mathcal{E}}\def \R {\mathbb{R}}
\def \C {\mathbb{C}}
\def \H {\mathbb{H}}
\def \Z {\mathbb{Z}}
\def \D {\mathbb{D}}
\def \M {\mathcal M}
\def\G {\mathbb G}
\def \CN {\mathbb N}
\def \CO {\mathcal O}
\def \CR {\mathcal R}
\def \CS {\mathcal S}
\def \CW {\mathcal W}
\def \rk {{\mbox rank}}
 \def \Vol {\mathrm{Vol}}
\def \tr {\mathrm{tr}}

\def\frametitle {}
\def\block {}

\section{Introduction}

For large classes of links $K$, but not for all links, the colored Jones polynomial, a sequence of (Laurent-) polynomial link invariants $J_{K,N}(q)$ indexed by a natural number $N$, develops a well-defined tail: Up to a common sign change the first $N$ coefficients of $J_{K,N}(q)$ agree with the first $N$ coefficients of $J_{K,N+1}(q)$ for all $N$. This gives rise to a power series with interesting properties. For example, for many knots with small crossing numbers the tail functions are given by products of one-variable specializations of the two-variable Ramanujan theta function \cite{ArmondDasbach:RogersRamanujan}.

The colored Jones polynomial $J_{K^*,N}(q)$ of the mirror image $K^*$ of $K$ satisfies $J_{K^*,N}(q)=J_{K,N}(1/q).$
If it exist the tail function of $J_{K^*,N}(q)$ is called the head of $J_{K,N}(q)$.
It was shown in \cite{DL:VolumeIsh, DasbachLin:HeadAndTail, DT:RefinedBound} that the head and tail functions for alternating links contain geometric information that can be used to give upper and lower bounds for the hyperbolic volume of a non-torus alternating link. In a series of papers and in a book Futer, Kalfagianni and Purcell extended those results to larger and larger classes of links (e.g. \cite{FKP:VolumeJones, FKP:Guts}). It is the goal of this paper to show that one can express the head or tail functions of an adequate link, a large class of links that contain alternating knots, as products of head or tail functions of alternating links that can be read-off an adequate diagram of the link. As a geometric application we strengthen a Theorem of Futer, Kalfagianni and Purcell related to the fiberedness of an adequate link.

\begin{example}
Let $K^*$ be the mirror image of the non-alternating knot $K=10_{154}$ as in Figure \ref{Example:10.154mirror}.  Its colored Jones polynomial $J_{K^*,N}(q)$ - up to multiplication with a suitable power $\pm q^{s_{N}}$ for some integers $s_{N}$ - is given by:

\begin{center}
\begin{tabular}{|l|l|}
\hline
$N=2$ & $1-2 q+2 q^2-3 q^3+2 q^4+\ldots $\\
$N=3$ & $1-2 q-q^2+5 q^3-3 q^4-4 q^5 +\ldots $\\
$N=4$ & $1-2 q-q^2+2 q^3+4 q^4-2 q^5-7 q^6 +\ldots$\\
$N=5$ & $1-2 q-q^2+2 q^3+q^4+5 q^5-6 q^6-5 q^7 +\ldots$\\
$N=6$ & $1-2 q-q^2+2 q^3+q^4+2 q^5+q^6-4 q^7-7 q^8 + \ldots$\\
$N=7$ & $1-2 q-q^2+2 q^3+q^4+2 q^5-2 q^6+3 q^7-6 q^8-7 q^9 + \ldots$\\
$N=8$ & $1-2 q-q^2+2 q^3+q^4+2 q^5-2 q^6+q^8-6 q^9-4 q^{10}+2 q^{11} + \ldots$\\
\hline
\end{tabular}  
\end{center} 
The data was obtained from Dror Bar-Natan's Mathematica package KnotTheory \cite{BarNatan:KnotTheory}.
Thus the tail of the colored Jones polynomial of the mirror image of the knot $10_{154}$ is given by the series: $$1-2 q - q^2+ 2q^3+q^4+ 2 q^5- 2 q^6+ 0 q^7 + \dots$$
\end{example}

We will study the tail series for adequate links, a class that generalizes alternating links. 
We will show that for every adequate link there is a prime alternating link with coinciding tail function.
Furthermore, we will strengthen a theorem of Futer, Kalfagianni and Purcell \cite{FKP:Guts, Futer:Fiberedness} and relate the complete tail function of the colored Jones polynomial of an adequate knot to the fiberedness of the link with fiber surface a certain spanning surface of that link.

\bigskip

\noindent {\bf Acknowledgment: } The authors thank Effie Kalfagianni for helpful suggestions and discussions during a visit to LSU.

\section{The Main Theorem}

\subsection{The all-$A$ state surface and the all-$A$ graph}

Let $K$ be a link with link diagram $D$ with $c$ crossings; to each crossing one can assign either of two Kauffman smoothings as in Figure \ref{Kauffman smoothings}.

\begin{figure}[ht] 
\begin{tikzpicture}[baseline=0cm, scale=0.8]
\draw[style=thick] (135:1)--(315:1);
\draw[color=white, line width=5pt] (45:1) -- (225:1);
\draw[style=thick] (45:1) -- (225:1);
\draw[style=thick, ->]  (1.4,0.2)  -- (2.4,0.6) node[above] {$A$} -- (3.4,1);
\draw[style=thick,->] (1.4,-0.2) -- (2.4,-0.6) node[below] {$B$} -- (3.4,-1);

\draw[xshift=4.8cm, yshift=-1cm, style=thick] (210:1) .. controls (240:0.1) and (300:0.1) .. (330:1);
\draw[xshift=4.8cm, yshift=-1cm, style=thick] (150:1) .. controls (120:0.1) and (60:0.1) .. (30:1);
\draw[xshift=4.8cm, yshift=1cm, style=thick] (300:1) .. controls (330:0.1) and (390:0.1) .. (420:1);
\draw[xshift=4.8cm, yshift=1cm, style=thick] (240:1) .. controls (210:0.1) and (150:0.1) .. (120:1);
\end{tikzpicture}
\caption{$A$ and $B$ smoothings for a link diagram  \label{Kauffman smoothings}}
\end{figure}
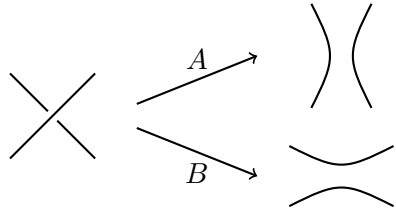

Thus there are $2^c$ ways, called states, to assign smoothings to the $c$ crossings of the diagram. Two of those states are important to us: The all-$A$ state and the all-$B$ state, where either only $A$-smoothings or only $B$-smoothings are assigned to the crossings. States are represented by smoothing diagrams in the plane.
Figure \ref{allB of 10_154} shows the all-$A$ smoothing diagram for a diagram of the mirror image of the knot $10_{154}$. Note, that a link can be recovered from its all-$A$ or all-$B$ smoothing diagrams.

\begin{figure}[ht] 
\includegraphics[width=5cm]{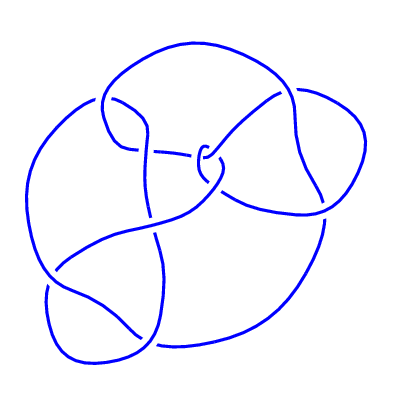}
\hspace {0.5cm}
\begin{tikzpicture}[baseline=-2.4cm, scale=0.8]
\draw[style=thick] (0,0)  circle (2cm);
\draw[style=thick] (0.8,0) circle (0.6cm);
\draw[style=thick] (-1,0) circle (0.4cm);
\draw[style=thick] (-3.5,1.3) circle (0.7);
\draw[style=thick] (-3.5,-1.3) circle (0.7);
\draw[style=dashed] (-0.9,1.8) -- (-3,1.8);
\draw[style=dashed] (-1.35, 1.5)-- (-2.8,1.5);
\draw[style=dashed] (-1.35, -1.5)-- (-2.8,-1.5);
\draw[style=dashed] (-3.5, -0.6) -- (-3.5, 0.6);
\draw[style=dashed] (0.2,0)--(-0.6,0);
\draw[style=dashed] (-1.4,0)--(-2,0);
\draw[style=dashed] (-3.35, 0.55)--(-1.4,-1.45);
\draw[style=dashed] (0.22,-0.1) -- (-1.64, -0.8);
\draw[style=dashed] (1.5,0.2)--(1.95,0.2);
\draw[style=dashed] (1.5,-0.2)--(1.95,-0.2);
\end{tikzpicture}
\caption{A diagram of the mirror image of the knot $10_{154}$ and its all-$A$ smoothing diagram. \label{Example:10.154mirror}}
\end{figure}

\begin{figure}[ht]
\begin{tikzpicture}[baseline=0cm, scale=0.8]
\draw[style=thick] (0,0)--(210:2)--(150:2)--(0,0)--(30:2)--(-30:2)--(0,0);
\draw[style=thick] (150:2) .. controls (110:1) and (-70:1) .. (-30:2);
\draw[style=thick] (150:2) .. controls (190:1) and (10:1) .. (-30:2);
\fill (0,0) circle(0.07);
\fill (210:2) circle(0.07);
\fill (150:2) circle (0.07);
\fill (30:2) circle (0.07);
\fill (-30:2) circle (0.07);
\end{tikzpicture}
\caption{The graph $\G_A$}
\end{figure}

The  all-$A$ smoothing and all-$B$-smoothing diagram  naturally lead to two graphs $\G_A$ and $\G_B$ where the vertices are the circles of the smoothing diagrams and the edges correspond to the 
smoothed crossing. A link diagram is called $A$-adequate (or $B$-adequate) if $\G_A$ (or $\G_B$) does not contain a loop, i.e. an edge that connects a vertex to itself. A link is adequate if it admits an $A$-adequate as well as a $B$-adequate diagram.

It was shown in \cite{Armond:HeadAndTailConjecture} that the tail of the colored Jones polynomial of an $A$-adequate link exists.
For alternating knots this was independently shown by Garoufalidis and Le \cite{GL:NahmSums}  and generalized to higher order tails.
An approach via link homologies was given by Rozansky \cite{Rozansky:HeadAndTail}.

\begin{maintheorem} \label{Main Theorem}
Suppose two $A$-adequate link diagrams only differ locally in their all-$A$ smoothing diagram as in Figure  \ref{EqualTails}

\begin{figure}[ht] 
\begin{tikzpicture}[scale=1]
\draw[style=thick] (0,0) -- (0,3);
\draw[style=dashed] (0,1)--(1,1);
\draw[style=thick] (-1,0)--(-1,3);
\draw[style=dashed] (-1,2)--(0,2);
\draw[style=thick] (1,0)--(1,3);
\draw[<->] (2,1.5)--(3,1.5);
\draw[style=thick] (5,0) -- (5,3);
\draw[style=dashed] (4,1)--(5,1);
\draw[style=thick] (4,0) -- (4,3);
\draw[style=dashed] (5,2)--(6,2);
\draw[style=thick] (6,0) -- (6,3); 
\end{tikzpicture}
\caption{Two links with coinciding tails of colored Jones polynomial \label{EqualTails}}
\end{figure}
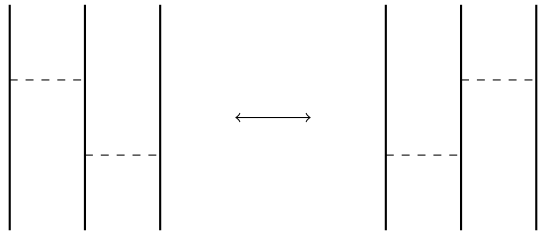

\noindent
where the three vertical lines represent arcs in three different smoothed circles. Then the tails of the colored Jones polynomials of the corresponding links coincide.
\end{maintheorem}

\begin{remark} The conditions on the $A$-adequate link diagram in the Main Theorem \ref{Main Theorem} imply that the link diagram is not alternating. For from the three circles in the all-$A$ smoothing depicted in Figure \ref{EqualTails} either the left or the right circle has to lie inside the middle circle and the other circle has to lie outside. In the all-$A$ smoothing of an alternating diagram for a given circle all other circles either lie outside or inside that circle.
\end{remark}

The proof of the Main Theorem is given in Section \ref{Sec:Main}.
First we will develop a few Corollaries of the Main Theorem.

The first corollary shows that the tail of the colored Jones polynomial of an $A$-adequate link is fully determined by the graph $\G_A$.
More specifically, a reduced graph $\G_A'$ is constructed from $\G_A$ by replacing all parallel edges, i.e. edges that connect the same two vertices, by a single edge. Figure \ref{reduced10154} gives an example. Then:

\begin{cor} \label{Dependence on abstract graph}
For an $A$-adequate link with $A$-adequate diagram $D$ and reduced all-$A$-graph $\G_A'$ the tail of the colored Jones polynomial only depends on $\G_A'$. 
Moreover, for each $A$-adequate link $L$ there is a prime alternating link $L'$ such that the tails of the colored Jones polynomials of the two links coincide.
\end{cor}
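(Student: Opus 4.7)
The plan is to deduce Corollary \ref{Dependence on abstract graph} by combining the Main Theorem with earlier work on the tail of alternating links.

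First, I would argue that iterated application of the Main Theorem shows that the tail of an $A$-adequate diagram depends only on the abstract isomorphism type of the graph $\mathbb{G}_A$, and not on the specific cyclic ordering of the edges around each circle of the all-$A$ smoothing. The local move of Figure \ref{EqualTails} exchanges two adjacent dashed edges at the middle circle provided they attach to two distinct neighboring circles; chaining such exchanges realizes arbitrary permutations of the edge orderings at each vertex, so long as $A$-adequacy is preserved along the way.

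Second, since $\mathbb{G}_A$ is planarly embedded on the sphere (together with the state circles), the classical medial construction associates to it an alternating link $L_{\text{alt}}$ whose all-$A$ graph realizes $\mathbb{G}_A$. By the first step the tails of $L$ and $L_{\text{alt}}$ agree. By the results of Armond \cite{Armond:HeadAndTailConjecture}, Dasbach--Lin \cite{DasbachLin:HeadAndTail}, and Garoufalidis--Le \cite{GL:NahmSums}, the tail of an alternating link depends only on its reduced Tait graph, which coincides with $\mathbb{G}_A'$. Combining these facts shows that the tail of $L$ depends only on $\mathbb{G}_A'$, proving the first assertion. For the primality statement, I would select $L_{\text{alt}}$ to be built from a $2$-connected version of $\mathbb{G}_A'$; cut edges of $\mathbb{G}_A'$ correspond to nugatory crossings that can be removed without affecting the reduced graph or the tail, yielding a prime alternating diagram.

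The main obstacle will be the first step, because the Main Theorem requires three \emph{distinct} circles and therefore does not directly commute two parallel edges that share both endpoints. I would address this by first using Main Theorem moves to separate other crossings away from the parallel pair, and then appealing to the tail-stability within a single twist region that is already implicit in Armond's proof of the existence of the tail for $A$-adequate links \cite{Armond:HeadAndTailConjecture}. A secondary technical point is that $A$-adequacy must be preserved at each intermediate step; this should follow because the abstract graph $\mathbb{G}_A$ is held fixed throughout and no new loops can be introduced, but verifying this carefully will require some bookkeeping.
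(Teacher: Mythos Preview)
Your overall strategy---use Main Theorem moves to reduce to an alternating link with the same graph, then quote the alternating theory---is close in spirit to the paper's, but the key reduction step is organized differently and, as you have written it, has a real gap. You claim that chaining the local swaps realizes \emph{arbitrary} permutations of the cyclic edge order at each vertex. This is stronger than what the Main Theorem gives and stronger than what is needed. The move only transposes two adjacent edges when they go to \emph{distinct} neighboring circles; it does not let you pass through all rotation systems on $\mathbb{G}_A$, and in particular it is not clear that every $A$-adequate smoothing can be carried to the specific planar embedding produced by your medial construction. The paper sidesteps this entirely: it picks an \emph{innermost} circle $C$ having edges on both sides, observes that an ``inside'' edge and an ``outside'' edge at $C$ necessarily go to distinct circles, and uses the Main Theorem repeatedly to sort all inside edges to one arc and all outside edges to the other. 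This exhibits the link as a connected sum of an alternating piece (inside $C$) with a smaller $A$-adequate piece, and one inducts. The multiplicativity of the colored Jones under connected sum, together with the alternating case (handled via \cite{ArmondDasbach:RogersRamanujan}, Whitney's $2$-isomorphism theorem, and mutation invariance), then gives dependence only on $\mathbb{G}_A'$. Your parallel-edge obstacle simply never arises in this argument, because the only swaps ever performed are inside/outside swaps.

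Your primality argument also does not work as written. Deleting cut \emph{edges} of $\mathbb{G}_A'$ disconnects the graph rather than producing a $2$-connected one, so you would obtain a split link, not a prime link; and cut \emph{vertices} (which are what obstruct primality of the alternating diagram) are not addressed. The paper instead notes that the connect-sum induction expresses the tail as a product of tails of alternating links, and then invokes the fact from \cite{ArmondDasbach:RogersRamanujan} that a product of tails of prime alternating links is again the tail of a single prime alternating link.
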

 
\begin{proof}  
First assume that the link is alternating. Then the circles in the all-$A$ smoothing diagram trace out faces in the diagram of the knot.
Thus $G_A$ is a plane graph. In \cite{ArmondDasbach:RogersRamanujan} it is shown that the reduction from $\G_A$ to $\G_A'$ does not change the tail of the colored Jones polynomial. Furthermore, by Whitney's classification theorem all embeddings of a planar graph into the plane are related by $2$-isomorphisms.
Those correspond to mutations in the link diagram. Since the colored Jones polynomial is invariant under mutations (e.g. \cite{StT:MutationCJP}) the claim holds for alternating links.

If the link diagram is adequate but not alternating then there is a circle in the all-$A$ smoothing diagram such that on either side of the circle are other circles.
Take an inner-most circle of that form and divide it into two arcs $\alpha_1$ and $\alpha_2$. By the Main Theorem  the link can be transformed into a link with equal tail by moving all edges coming from crossings inside the circle to $\alpha_1$ and all edges coming from crossings outside the circle to $\alpha_2$.
Figure \ref{Example moving edges} gives an example.
The resulting link is a connected sum of an alternating link (inside the circle) and an adequate link with fewer crossings (outside the circle).
Since the colored Jones polynomial is multiplicative under connected sum the first claim follows.
In particular a tail function of the colored Jones polynomial of an adequate link can be expressed as the product of tail functions of colored Jones polynomials of alternating links. As shown in \cite{ArmondDasbach:RogersRamanujan} the product of the tail functions of two prime alternating links is again the tail function of a prime alternating link and the second claim follows.
\end{proof} 

\begin{example}
Figure \ref{Example moving edges} illustrates the operation in the proof of Corollary \ref{Dependence on abstract graph} for the all-$A$ smoothing diagram of the mirror image of the knot $10_{154}$.

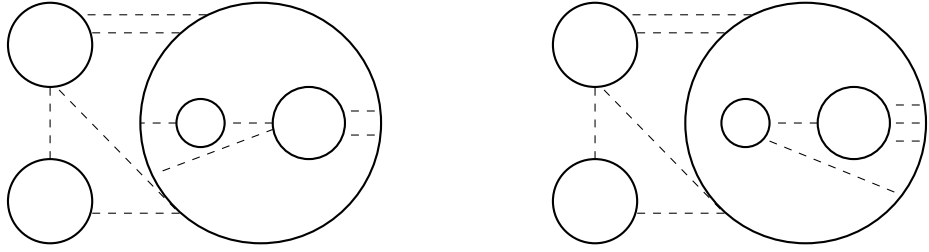
\begin{figure}[ht] 
\begin{tikzpicture}[baseline=-2.4cm, scale=0.8]
\draw[style=thick] (0,0)  circle (2cm);
\draw[style=thick] (0.8,0) circle (0.6cm);
\draw[style=thick] (-1,0) circle (0.4cm);
\draw[style=thick] (-3.5,1.3) circle (0.7);
\draw[style=thick] (-3.5,-1.3) circle (0.7);
\draw[style=dashed] (-0.9,1.8) -- (-3,1.8);
\draw[style=dashed] (-1.35, 1.5)-- (-2.8,1.5);
\draw[style=dashed] (-1.35, -1.5)-- (-2.8,-1.5);
\draw[style=dashed] (-3.5, -0.6) -- (-3.5, 0.6);
\draw[style=dashed] (0.2,0)--(-0.6,0);
\draw[style=dashed] (-1.4,0)--(-2,0);
\draw[style=dashed] (-3.35, 0.55)--(-1.4,-1.45);
\draw[style=dashed] (0.22,-0.1) -- (-1.64, -0.8);
\draw[style=dashed] (1.5,0.2)--(1.95,0.2);
\draw[style=dashed] (1.5,-0.2)--(1.95,-0.2);
\end{tikzpicture}
\hspace*{2cm}
\begin{tikzpicture}[baseline=-2.4cm, scale=0.8]
\draw[style=thick] (0,0)  circle (2cm);
\draw[style=thick] (0.8,0) circle (0.6cm);
\draw[style=thick] (-1,0) circle (0.4cm);
\draw[style=thick] (-3.5,1.3) circle (0.7);
\draw[style=thick] (-3.5,-1.3) circle (0.7);
\draw[style=dashed] (-0.9,1.8) -- (-3,1.8);
\draw[style=dashed] (-1.35, 1.5)-- (-2.8,1.5);
\draw[style=dashed] (-1.35, -1.5)-- (-2.8,-1.5);
\draw[style=dashed] (-3.5, -0.6) -- (-3.5, 0.6);
\draw[style=dashed] (0.2,0)--(-0.6,0);
% \draw[style=dashed] (-1.4,0)--(-2,0);
\draw[style=dashed] (-0.6, -0.3) -- (1.6, -1.2);
\draw[style=dashed] (-3.35, 0.55)--(-1.4,-1.45);
% \draw[style=dashed] (0.22,-0.1) -- (-1.54, -1.9);
\draw[style=dashed] (1.5,0.3)--(1.95,0.3);
\draw[style=dashed]  (1.5, 0)--(1.95, 0);
\draw[style=dashed] (1.5,-0.3)--(1.95,-0.3);
\end{tikzpicture}
\caption{The all-$A$ smoothing diagram. \label{allB of 10_154} of the mirror image of the knot $10_{154}$ and the all-$A$ smoothing diagram of a link with identical tail function \label{Example moving edges}}
\end{figure}

\begin{figure}[ht]
\subfloat[$\G_A$]{
\begin{tikzpicture}[baseline=0cm, scale=0.8]
\draw[style=thick] (0,0)--(210:2)--(150:2)--(0,0)--(30:2)--(-30:2)--(0,0);
\draw[style=thick] (150:2) .. controls (110:1) and (-70:1) .. (-30:2);
\draw[style=thick] (150:2) .. controls (190:1) and (10:1) .. (-30:2);
\fill (0,0) circle(0.07);
\fill (210:2) circle(0.07);
\fill (150:2) circle (0.07);
\fill (30:2) circle (0.07);
\fill (-30:2) circle (0.07);
\end{tikzpicture}
}
\hspace*{1cm}
\subfloat[$\G_A'$]
{
\begin{tikzpicture}[baseline=0cm, scale=0.8]
\draw[style=thick] (0,0)--(210:2)--(150:2)--(0,0)--(30:2)--(-30:2)--(0,0);
\fill (0,0) circle(0.07);
\fill (210:2) circle(0.07);
\fill (150:2) circle (0.07);
\fill (30:2) circle (0.07);
\fill (-30:2) circle (0.07);
\end{tikzpicture}
}
\caption{The graph $\G_A$ and the reduced graph $\G_A'$ \label{reduced10154}}
\end{figure}
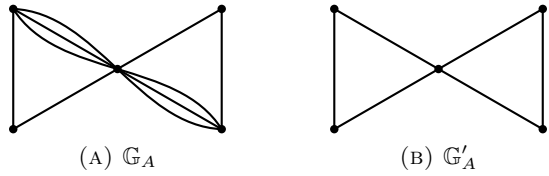

In particular, Figure \ref{reduced10154} shows that the tail function of the knot $10_{154}$ is equivalent to the tail function of the connected sum of two (negative) trefoils.
It was shown in \cite{ArmondDasbach:RogersRamanujan} that those tails are express as an evaluation of the two variable Ramanujan theta function:
$$f(a,b)=\sum_{n=-\infty}^{\infty} a^{n(n+1)/2} b^{n (n-1)/2}.$$
More specifically: The tail function of the (negative) trefoil is
$$f(-q^2,-q)= \sum_{n=-\infty}^{\infty} (-q)^{3/2 n^2+n/2}.$$
Thus the tail function of the knot $10_{154}$ is $f(-q^2,-q)^2$.

\end{example}

The main theorem immediately implies the following extension of a theorem of Purcell, Kalfagianni and Futer \cite{FKP:Guts, Futer:Fiberedness}. For every state one can construct a spanning surface for the link similar to Seiferts construction of Seifert surfaces (e.g. \cite{Futer:Fiberedness}). Let $S_A$ be the state surface for the all-$A$ state and $\G_A'$ as constructed above.

\begin{theorem}
For a link $L$ with diagram $D_L$ and state surface $S_A$ determined by the all-$A$ state the following statements are equivalent: 

\begin{enumerate}
\item $S^3-L$ fibers over $S^1$, with fiber $S_A$. \label{Equiv1}
\item The reduced all-$A$ graph $\G_A'$ is a tree. \label{Equiv2}
\item The diagram $D_L$ is $A$-adequate and $\beta_A=0$.  \label{Equiv3}
\item The diagram $D_L$ is $A$-adequate and the tail of the colored Jones polynomial is $1$.    \label{Equiv4}
\end{enumerate}
\end{theorem}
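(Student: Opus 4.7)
The plan is to establish the four-way equivalence by proving $(1)\Leftrightarrow(3)$, $(2)\Leftrightarrow(3)$, and $(2)\Leftrightarrow(4)$. The first of these is precisely the theorem of Futer, Kalfagianni and Purcell from \cite{FKP:Guts, Futer:Fiberedness} that the present statement is strengthening, so I would simply invoke it. The equivalence $(2)\Leftrightarrow(3)$ is a short graph-theoretic translation: $A$-adequacy of $D_L$ is equivalent to $\G_A$ (hence $\G_A'$) having no loops, and $\beta_A$ is the first Betti number of $\G_A'$. A connected graph is a tree if and only if it is loop-free and has vanishing first Betti number, so $(2)$ and $(3)$ express the same condition on $\G_A'$.

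The substantive content is $(2)\Leftrightarrow(4)$, where the Main Theorem enters. For $(2)\Rightarrow(4)$, I would iterate the construction from the proof of Corollary \ref{Dependence on abstract graph}: if $\G_A'$ is a tree, repeated application of the Main Theorem pushes crossings across innermost state circles and exhibits $L$ as a connected sum of alternating links whose individual reduced all-$A$ graphs are the single edges of the tree. Each such factor is a positive alternating $(2,n)$-torus link whose tail is $1$, as the theta-function product of \cite{ArmondDasbach:RogersRamanujan} is indexed by cycles in the reduced graph and a single edge has none. Multiplicativity of the tail under connected sum then gives tail $1$ for $L$.

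Conversely, for $(4)\Rightarrow(2)$, suppose the tail is $1$ and $D_L$ is $A$-adequate. Corollary \ref{Dependence on abstract graph} produces a prime alternating link $L'$ with the same reduced graph $\G_A'$ and the same tail. If $\G_A'$ contained a cycle, the Armond--Dasbach formula would express the tail of $L'$ as a product with at least one non-trivial factor: for a triangular cycle this factor is $f(-q^2,-q)=\prod_{n\ge 1}(1-q^n)$ by Euler's pentagonal number theorem, and longer cycles give more general theta-function evaluations, none of which equal the constant $1$. This contradicts trivial tail and forces $\G_A'$ to be a tree.

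The main obstacle is the last claim, namely that every cycle of $\G_A'$ produces a genuinely non-trivial factor in the tail power series. The cleanest route is via the explicit theta-function formulation in \cite{ArmondDasbach:RogersRamanujan}; a more elementary alternative would be to examine the coefficient of $q$ in the tail and show that its vanishing characterises the tree case among all reduced all-$A$ graphs, combined with a short compatibility check that this coefficient is multiplicative under the connected-sum decomposition provided by the Main Theorem.
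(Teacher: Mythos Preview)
Your overall architecture matches the paper's: cite Futer--Kalfagianni--Purcell for the fibration criterion, observe that $(2)\Leftrightarrow(3)$ is a graph-theoretic tautology once $\beta_A$ is identified with $b_1(\G_A')$, and derive $(2)\Leftrightarrow(4)$ from Corollary~\ref{Dependence on abstract graph}. The paper's proof is three sentences of citations; you are filling in what those citations actually say.

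A few small corrections. First, the paper attributes $(1)\Leftrightarrow(2)$ to \cite{FKP:Guts}, not $(1)\Leftrightarrow(3)$; your chain still closes, but the bookkeeping is off. Second, Corollary~\ref{Dependence on abstract graph} does not claim the prime alternating link $L'$ has the \emph{same} $\G_A'$ as $L$, only the same tail; what you actually need is the first clause of that corollary (the tail depends only on $\G_A'$) together with the connected-sum decomposition in its proof, whose alternating summands inherit subgraphs of $\G_A'$. Third, that decomposition halts at alternating pieces, not at single edges, so the factors are alternating links whose reduced $A$-graphs are subtrees, not necessarily $(2,n)$-torus links; their tails are still $1$ by the alternating case.

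Your ``main obstacle'' paragraph is well taken and in fact goes beyond what the paper spells out. The cleaner route you propose at the end---reading off the coefficient of $q$---is exactly the content of \cite{DasbachLin:HeadAndTail, DL:VolumeIsh}, where that coefficient is shown to have absolute value $b_1(\G_A')$ for $A$-adequate diagrams. This gives $(3)\Leftrightarrow(4)$ directly and avoids the theta-function machinery altogether; it is arguably what the paper intends when it cites those references for $(2)\Leftrightarrow(3)$ and then Corollary~\ref{Dependence on abstract graph} for $(2)\Leftrightarrow(4)$.
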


\begin{proof}
The equivalence of  (\ref{Equiv1}) and (\ref{Equiv2}) is shown in \cite{FKP:Guts}. By \cite{DasbachLin:HeadAndTail} and \cite{DL:VolumeIsh} it follows that (\ref{Equiv2}) and (\ref{Equiv3}) are equivalent. Corollary \ref{Dependence on abstract graph} establishes the equivalence of (\ref{Equiv4}) and (\ref{Equiv2}).
\end{proof}

Moreover, it directly follows from Corollary \ref{Dependence on abstract graph}

\begin{cor}[\cite{Armond:Walks}]
For a closed positive braid the tail of the colored Jones polynomial is identically $1$.
\end{cor}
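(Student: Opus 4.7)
The plan is to deduce the corollary directly from the equivalence (\ref{Equiv2}) $\Leftrightarrow$ (\ref{Equiv4}) in the theorem above, so the whole task reduces to verifying that for a closed positive braid $\widehat{\beta}$ the reduced all--$A$ graph $\G_A'$ is a tree.

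First I would make the standard observation that in a positive crossing the $A$--smoothing is exactly the orientation--preserving (Seifert) smoothing. Consequently the all--$A$ smoothing diagram of a positive braid diagram on $n$ strands is precisely the union of the $n$ Seifert circles, one per strand. Each appearance of a generator $\sigma_i$ in the braid word then contributes one crossing whose $A$--resolution produces an arc joining the $i$th Seifert circle to the $(i+1)$st one. In particular, every edge of $\G_A$ joins two distinct vertices, so $\G_A$ has no loops and the diagram is automatically $A$--adequate; this takes care of the adequacy hypothesis in the theorem.

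Next I would read off the combinatorial shape of the reduced graph $\G_A'$ from this description: its vertex set is $\{1,\dots,n\}$, and after collapsing parallel edges there is at most one edge, joining $i$ and $i+1$, for each $i$ such that $\sigma_i$ actually occurs in $\beta$. Hence $\G_A'$ is a subgraph of the path $1\!-\!2\!-\!\cdots\!-\!n$, and in particular it is a forest. If $\widehat{\beta}$ is non--split then every generator $\sigma_i$ must appear, so $\G_A'$ is exactly that path, which is a tree; the theorem then gives tail of $J_{\widehat{\beta},N}(q)$ equal to $1$.

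The only point that needs a line of care is the split/reducible case: if some $\sigma_i$ is absent, $\widehat{\beta}$ splits as a disjoint union of closed positive braids on fewer strands, and each summand has trivial tail by induction; one concludes via the product behavior of the tail under split union / connected sum (as used already in the proof of Corollary \ref{Dependence on abstract graph}). This is the only mildly subtle step, but it is bookkeeping rather than substance; the real content, namely the identification of $\G_A'$ with a path graph, is purely combinatorial.
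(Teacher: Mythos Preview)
Your proposal is correct and follows essentially the same route the paper has in mind. The paper gives no explicit argument beyond ``directly follows from Corollary~\ref{Dependence on abstract graph}''; you instead invoke the equivalence (\ref{Equiv2})$\Leftrightarrow$(\ref{Equiv4}) of the preceding theorem, but since that equivalence is itself established via Corollary~\ref{Dependence on abstract graph}, this is the same path with the combinatorics (that the all-$A$ smoothing of a positive braid is the Seifert picture, so $\G_A'$ is a path) made explicit. Your treatment of the split case is a harmless elaboration.
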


\section{Proof of Main Theorem} \label{Sec:Main}

\subsection{Skein Theory} For a more detailed explanation of skein theory, see  \cite{Lickorish:KnotTheoryBook, MasbaumVogel:3valentGraphs}.

The Kauffman bracket skein module, $S(M;R,A)$, of a $3$-manifold $M$ and ring $R$ with invertible element $A$, is the free $R$-module generated by isotopy classes of framed links in $M$, modulo the submodule generated by the Kauffman relations:

\begin{center}
\begin{tabular}{c c}
$\includegraphics[width=.45in]{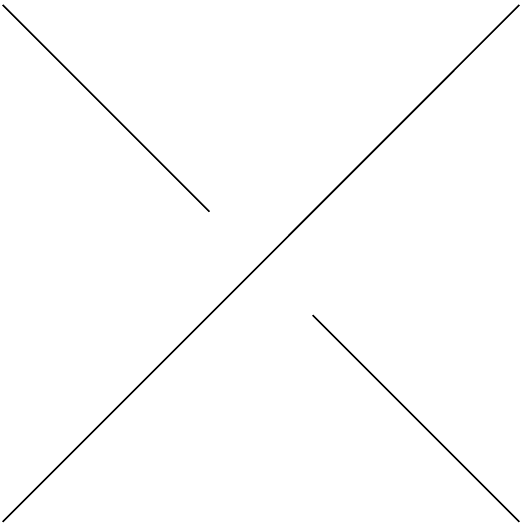} \raisebox{13pt}{$\;= A$} \includegraphics[width=.45in]{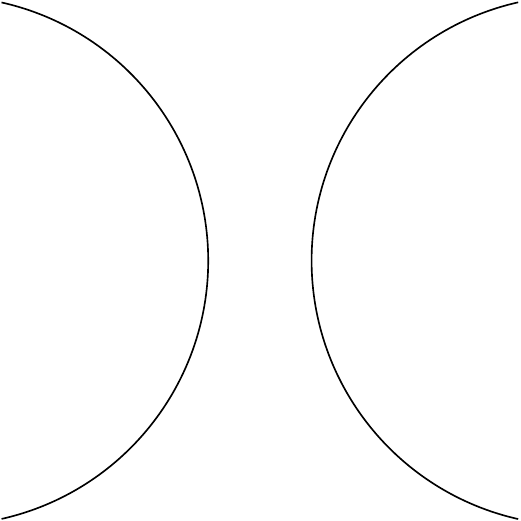} \raisebox{13pt}{$+ A^{-1}$} \includegraphics[width=.45in]{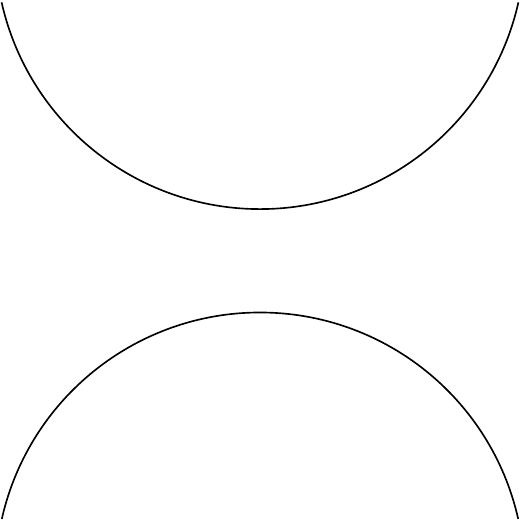}\;,\qquad\qquad$
&
$\includegraphics[width=.45in]{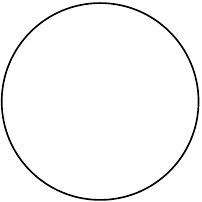} \raisebox{13pt}{$\;= -A^2 -A^{-2}$}$
\end{tabular}
\end{center}

If $M$ has designated points on the boundary, then the framed links must include arcs which meet all of the designated points.

In this paper we will take $R = \mathbb{Q}(A)$, the field of rational functions in variable $A$ with coefficients in $\mathbb{Q}$. As we are concerned with the lowest terms of a polynomial, we will need to express rational functions as Laurent series and define:

\begin{df}
Let $f \in \Q (A)$, define $d(f)$ to be the minimum degree of $f$ expressed as a Laurent series in $A$.
\end{df}

Note that $d(f)$ can be calculated without referring to the Laurent series. Any rational function $f$ expressed as $\frac{P}{Q}$ where $P$ and $Q$ are both polynomials. Then $d(f) = d(P) - d(Q)$.

\begin{df}
For two Laurent series $P_1(A)$ and $P_2(A)$ we define
$$P_1(A) \;\dot{=}_n\; P_2(A)$$
if after multiplying $P_1(A)$ by $\pm A^{s_1}$ and $P_2(A)$ by $\pm A^{s_2}$, $s_1$ and $s_2$ some powers, to get power series $P'_1(A)$ and $P'_2(A)$ each with positive constant term, $P'_1(A)$ and $P'_2(A)$ agree $\mod A^n$.
\end{df}

 We will be concerned with two particular skein modules: $S(S^3;R,A)$, which is isomorphic to $R$ under the isomorphism sending the empty link to $1$, and $S(D^3;R,A)$, where $D^3$ has $2n$ designated points on the boundary. With these designated points, $S(D^3;R,A)$ is also called the Temperley-Lieb algebra $TL_n$.

We will give an alternate explanation for the Temperley-Lieb algebra. First, consider the disk $D^2$ as a rectangle with $n$ designated points on the top and $n$ designated points on the bottom. Let $TLM_n$ be the set of all crossing-less matchings on these points, and define the product of two crossing-less matchings by placing one rectangle on top of the other and deleting any components which do not meet the boundary of the disk. With this product, $TLM_n$ is a monoid, which we shall call the Temperley-Lieb monoid.

Any element in $TL_n$ has the form $\sum_{M \in TLM_n} c_M M$, where $c_M \in \mathbb{Q}(A)$. Multiplication in $TL_n$ is slightly different from multiplication in $TLM_n$, because in $TLM_n$ complete circles are removed, but in $TL_n$ when a circle is removed, it is replaced with $(-A^2-A^{-2})$.

There is a special element in $TL_n$ of fundamental importance to the colored Jones polynomial, called the Jones-Wentzl idempotent, denoted $f^{(n)}$. Diagramatically this element is represented by an empty box with $n$ strands coming out of it on two opposite sides. By convention an $n$ next to a strand in a diagram indicates that the strand is replaced by $n$ parallel ones.

With $$\Delta_{n}:=(-1)^{n} \frac {A^{2(n+1)}-A^{-2 (n+1)}}{A^{2}-A^{-2}}$$ and
$\Delta_{n}!:= \Delta_{n} \Delta_{n-1} \dots \Delta_{1}$ the Jones-Wenzl idempotent satisfies

\begin{center}
\begin{tabular}{c c}
$\begin{tikzpicture}[baseline=0.8cm]
\draw(0,0)--(0,1) 
node[rectangle, draw, ultra thick, fill=white]{ } --(0,2) node[right]{\scriptsize{n+1}};
\end{tikzpicture} =
\begin{tikzpicture}[baseline=0.8cm]
\draw(0,0)--(0,1) 
node[rectangle, draw, ultra thick, fill=white]{ } --(0,2) node[right]{\scriptsize{n}};
\draw(0.7,0)--(0.7,2) node[right]{\scriptsize{1}};
\end{tikzpicture} -
\left( \frac {\Delta_{n-1}} {\Delta_{n}} \right ) \, \, 
\begin{tikzpicture}[baseline=0.8cm]
\draw (0,0) node[right]{\scriptsize{n}}--(0,1) node[left]{\scriptsize{n-1}}
--(0,2) node[right]{\scriptsize{n}};
\draw[ultra thick, fill=white] (-0.2 ,0.5) rectangle (0.4, 0.65); 
\draw (0.2, 0.65) arc (180:0:0.25);
\draw (0.7,0) node[right]{1}--(0.7, 0.65);
\draw[ultra thick, fill=white] (-0.2 ,1.5) rectangle (0.4, 1.35); 
\draw (0.2, 1.35) arc (-180:0:0.25);
\draw (0.7,2) node [right]{\scriptsize{1}} --(0.7, 1.35);
\end{tikzpicture},
\, \qquad \qquad \qquad
$
&
$\begin{tikzpicture}[baseline=0.8cm]
\draw(0,0)--(0,1) 
node[rectangle, draw, ultra thick, fill=white]{ } --(0,2) node[right]{\scriptsize{1}};
\end{tikzpicture} = \, \, 
\begin{tikzpicture}[baseline=0.8cm]
\draw(0,0)--(0,2) 
node[right]{\scriptsize{1}};
\end{tikzpicture}$
\end{tabular}
\end{center}

with the properties

$$\includegraphics[height=1in]{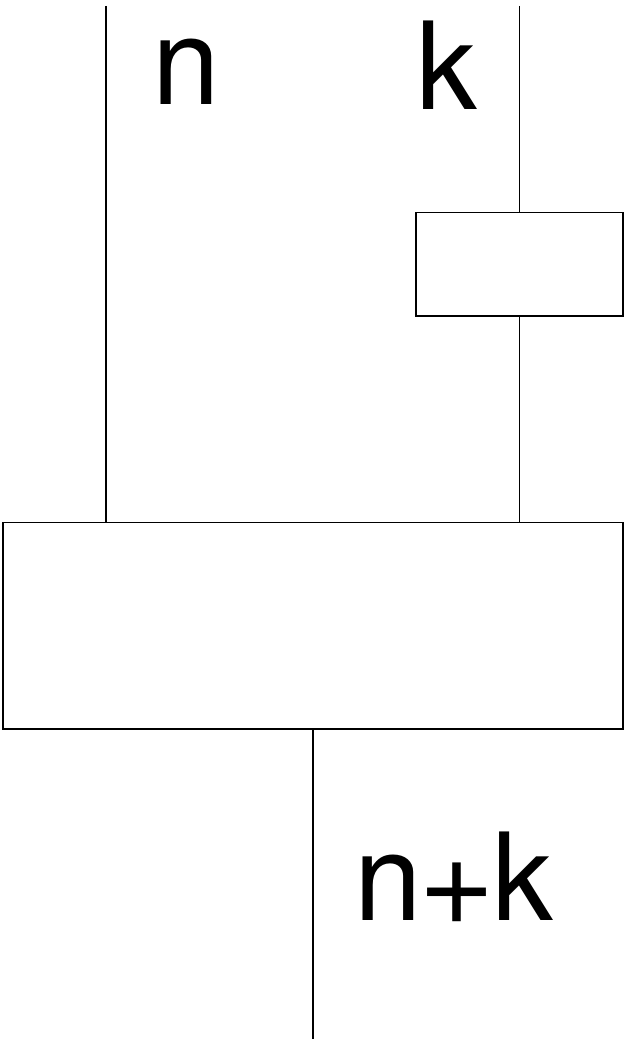} \raisebox{.5in}{$\;=\;$} \includegraphics[height=1in]{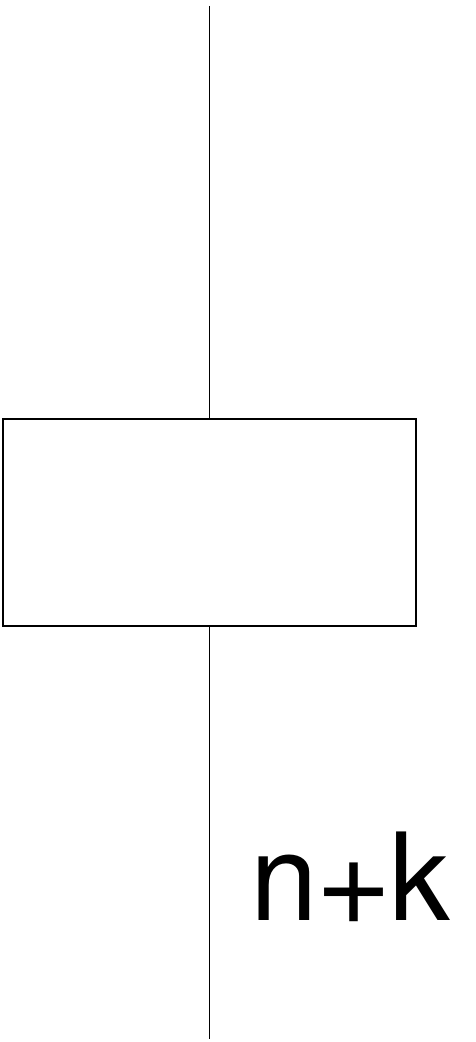}\raisebox{.5in}{,}
\, \qquad \qquad 
\includegraphics[height=1in]{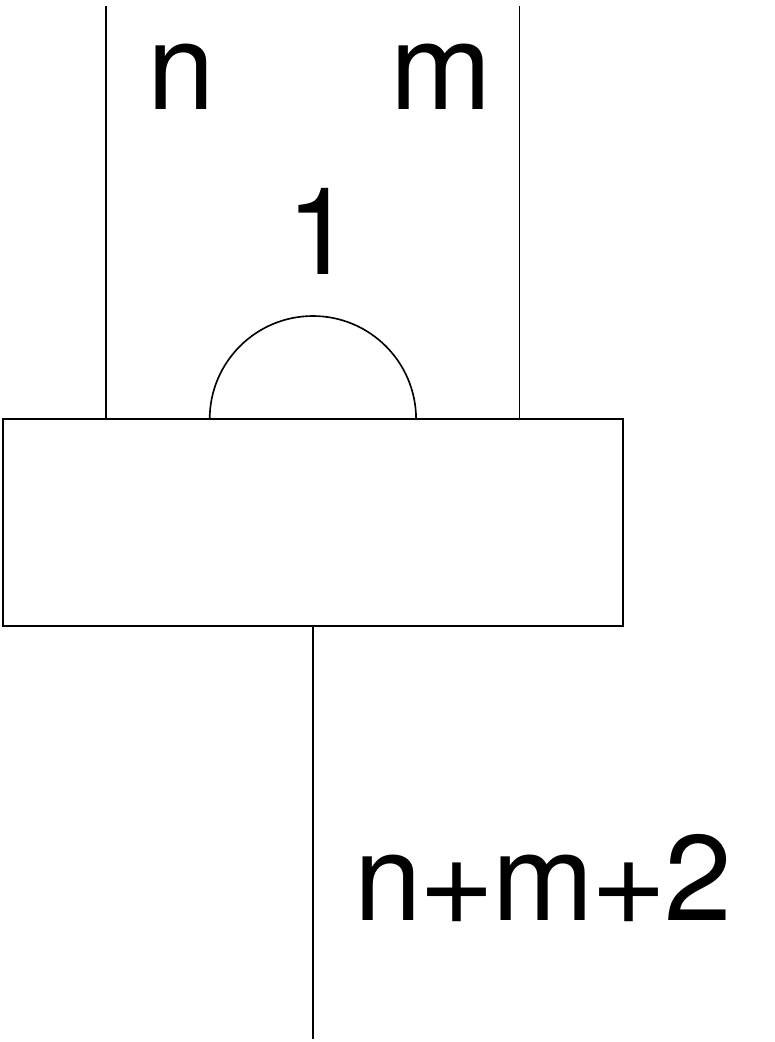} \raisebox{.5in}{$\;= 0$} 
\raisebox{.5in}{,}\, \qquad \qquad 
\includegraphics[height=1in]{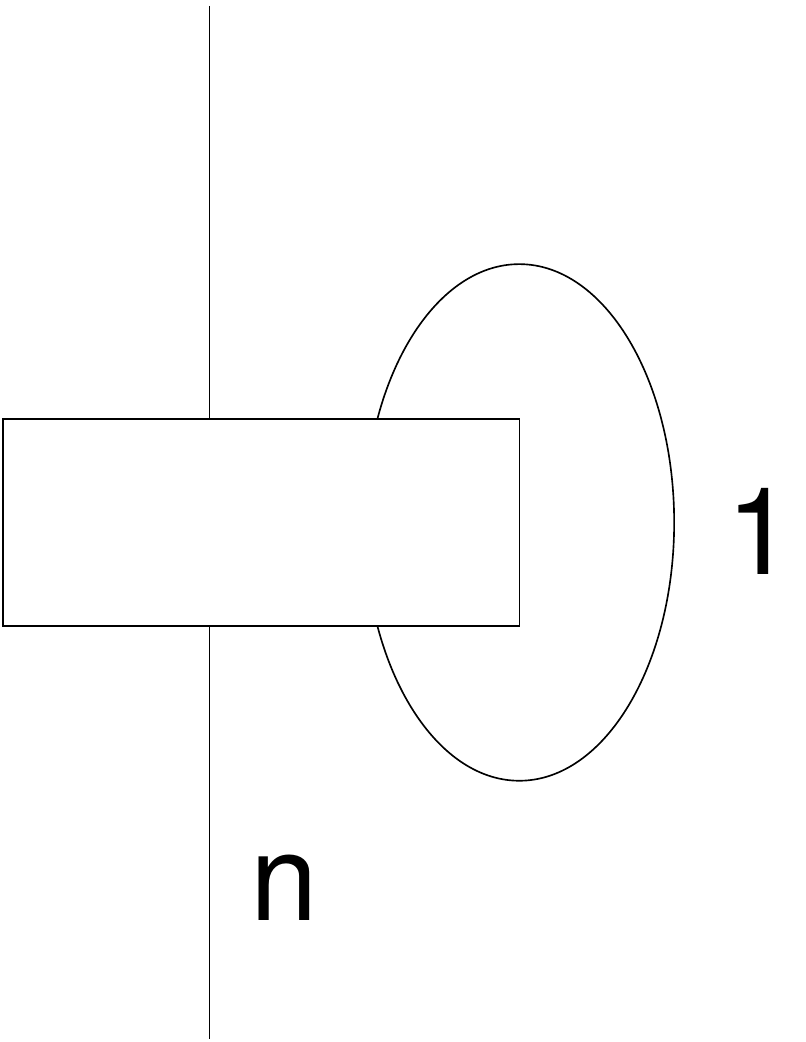} \raisebox{.45in}{$\;= \left( \frac {\Delta_{n+1}} {\Delta_{n}} \right )\;$} \includegraphics[height=1in]{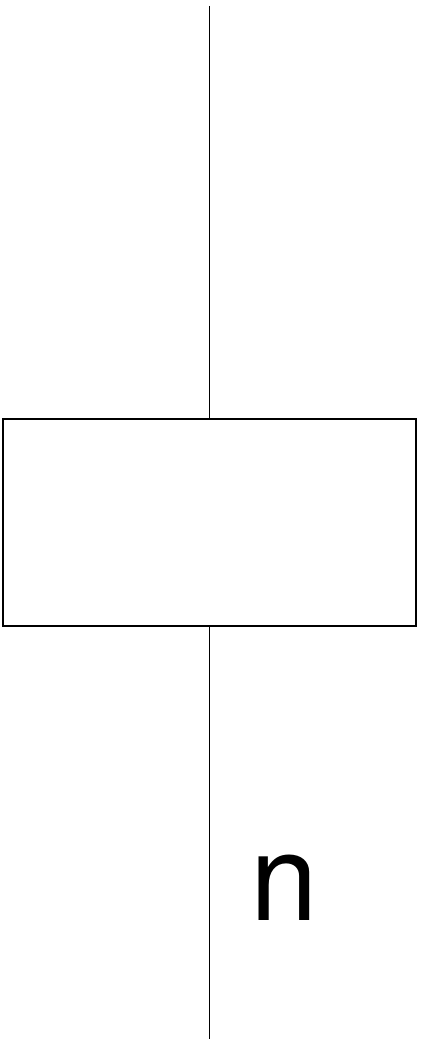}$$

The (unreduced) colored Jones polynomial $\tilde J_{n,D}(A)$ of a link diagram $D$ can be defined as the value of the skein relation applied to a diagram $D$, where every component is decorated by an $n$ together with the Jones-Wenzl idempotent.  Recall that $A^{-4}=q$. To obtain the reduced colored Jones polynomial of a link $L$ with diagram $D$, we must compensate for the writhe $w(D)$ of $D$ and divide by the value on the unknot. That is
$$J_{L,n+1}(q):=\left. (-A)^{(-n^2-2n)w(D)} \frac{\tilde J_{D,n}(A)} {\Delta_{n}}\right|_{A=q^{-1/4}}.$$

We can now properly define the tail of the colored Jones polynomial:

\begin{df}
The tail of the reduced colored Jones polynomial of a link $L$ -- if it exists -- is a series $T_L(q)$, with
$$T_L(q) \;\dot{=}_N\; J_{L,N}(q), \text{ for all } N$$

The tail of the unreduced colored Jones polynomial of a link $L$ -- if it exists -- is a series $\tilde T_L(A)$, with
$$\tilde T_L(A) \;\dot{=}_{4(n+1)}\; \tilde J_{D,n}(A), \text{ for all } n$$
\end{df}

In \cite{ArmondDasbach:RogersRamanujan} and \cite{Armond:HeadAndTailConjecture} we showed that the (unreduced) tail of $B$-adequate links are determined by a collection of crossingless skein diagrams coming from the B-state graph. Given a link diagram $D$, construct a skein diagram $S^{(n)}_B$ by replacing the former crossings in the all $B$-smoothing of $D$ by an idempotent colored $2n$ connecting the two circles.

\begin{figure}[htbp] %
   \centering
   \includegraphics[height=1.5in]{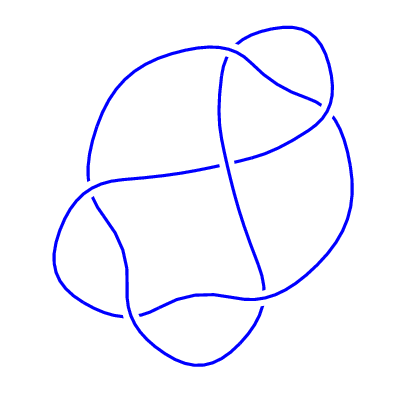}%}
   \quad
   \raisebox{32pt}{$\longrightarrow$}
   \quad
   \includegraphics[height=1.5in]{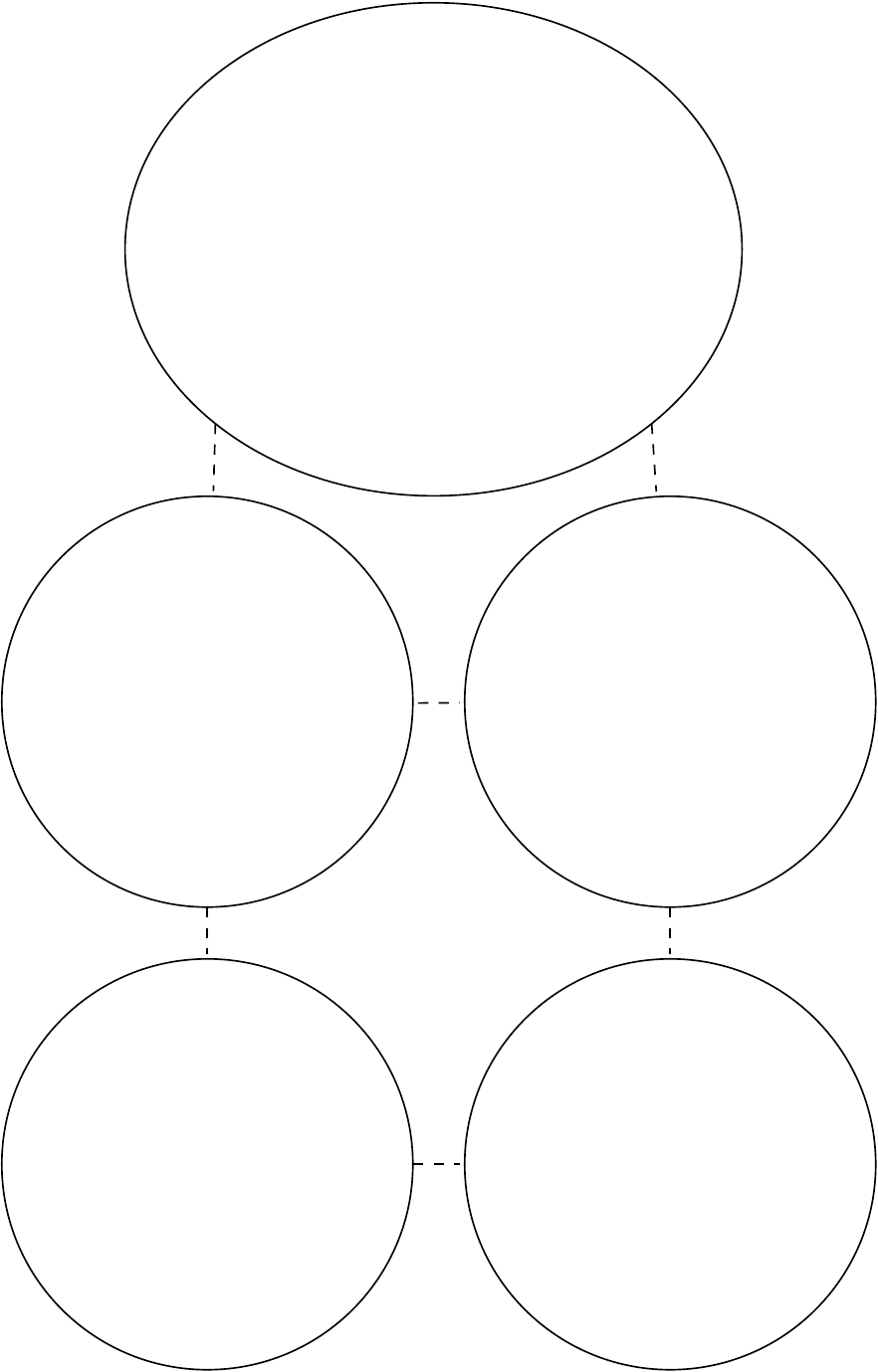}%} 
   \quad
   \raisebox{32pt}{$\longrightarrow$}
   \quad
   \includegraphics[height=1.5in]{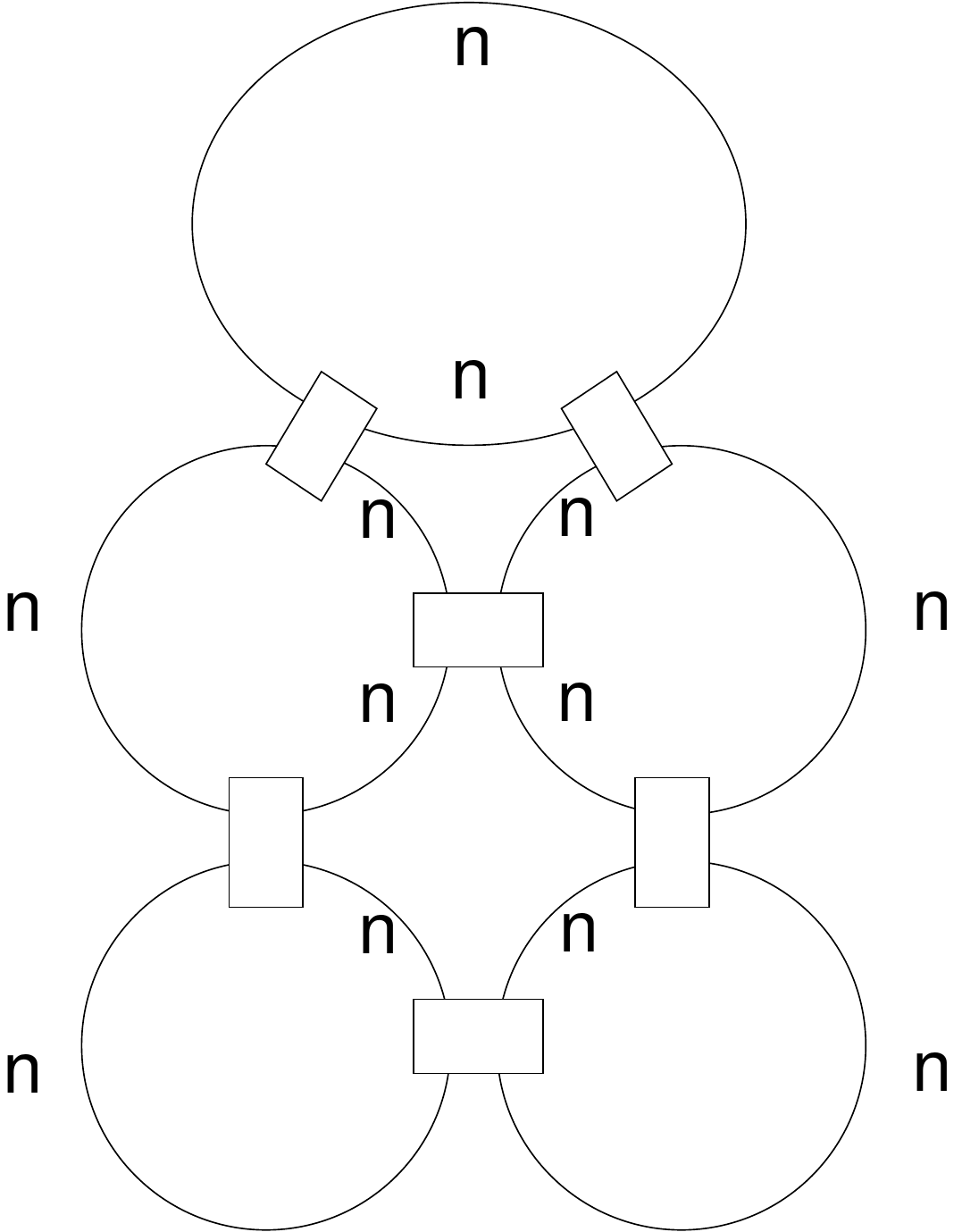}%}
  
   \caption{The diagram $S^{(n)}_B$ for $6_2$}
\end{figure}

\begin{lem}[\cite{Armond:HeadAndTailConjecture, ArmondDasbach:RogersRamanujan}]
\label{mainlemma}
If $D$ is a $B$-adequate link diagram, then 
$$\tilde J_{D,n}(A) \; \dot{=}_{4(n+1)} \; S^{(n)}_B.$$
\end{lem}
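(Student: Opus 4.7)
The plan is to expand $\tilde J_{D,n}(A)$ as a Kauffman-bracket state sum on the $n$-parallel cabling of $D$, isolate the all-$B$ cabled state as producing the skein element $S^{(n)}_B$, and use $B$-adequacy to show that all other cabled states contribute at strictly higher order in $A$.

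First I would recall that $\tilde J_{D,n}(A)$ equals the skein value of the $n$-parallel cabling of $D$ with a Jones-Wenzl idempotent $f^{(n)}$ placed on each component. Using the property $f^{(n)} f^{(n)} = f^{(n)}$, one may equivalently insert an $f^{(n)}$ on each of the two strand bundles entering every crossing of $D$. Expanding each of the $n^2$ mini-crossings at every original crossing via the Kauffman relation then yields a sum over cabled states $s$, weighted by $A^{a(s)-b(s)}$ times the skein value of the resulting idempotent-decorated crossingless diagram.

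Second I would isolate the cabled state in which every mini-crossing is $B$-smoothed. At each crossing of $D$, this produces two $n$-strand bundles connecting the pair of adjacent all-$B$ circles, with the incoming idempotents pressed against the resulting strip. Via the absorption identity relating $f^{(n)} \otimes f^{(n)}$ and $f^{(2n)}$ at lowest order, the local picture becomes a single $2n$-wide Jones-Wenzl idempotent between the two circles; globally this assembles into precisely the skein diagram $S^{(n)}_B$, with accompanying monomial $A^{-n^2 c}$ matching the expected minimum degree of $\tilde J_{D,n}(A)$ for a $B$-adequate link with $c$ crossings.

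The principal obstacle is the bound on non-all-$B$ cabled states. Two mechanisms work in concert. On one hand, the defining property that $f^{(n)}$ annihilates any turnback kills outright every cabled state that produces a cap-cup feeding into an idempotent, eliminating the bulk of would-be competitors. On the other hand, for the surviving states a Lickorish-style degree count applies: each mini-crossing flipped from $B$ to $A$ shifts the $A$-power by $+2$, and the $B$-adequacy of $D$, which propagates to the non-annihilated cabled states, prevents this shift from being offset by a net gain in the circle count. The classical computation gives a degree gap of $4$ per altered original crossing, but here one needs the sharper gap $4(n+1)$; this extra factor of $n+1$ should emerge from a careful analysis of the coefficients $\Delta_{k-1}/\Delta_k$ appearing in the Jones-Wenzl recursion, whose $A$-orders scale with $n$, and would be the main technical computation of the proof.
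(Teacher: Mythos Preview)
The paper does not prove this lemma; it is quoted from the authors' earlier work, so there is no in-paper argument to compare against and your sketch must stand on its own.

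Your overall architecture---expand the cable, isolate a dominant state, bound the rest---is reasonable, but there are two genuine gaps.

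First, Step~2 is not correct as written. The all-$B$ cabled state, carrying only the $f^{(n)}$'s inherited from the cable components, is \emph{not} the skein element $S^{(n)}_B$, which has a $2n$-colored idempotent $f^{(2n)}$ at each former crossing. There is no ``absorption identity'' converting $f^{(n)}\otimes f^{(n)}$ into $f^{(2n)}$; the absorption goes the other way ($f^{(2n)}$ absorbs a smaller idempotent on a sub-bundle, not conversely). Showing that these two skein elements already agree to order $4(n+1)$ is itself a nontrivial statement, essentially the single-crossing version of the lemma, and you have assumed it rather than proved it.

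Second, and more seriously, you explicitly concede that the required degree bound is missing. The classical Lickorish count gives a gap of $4$ per modified crossing, and you correctly note one needs $4(n+1)$. Your suggestion that the extra factor comes from the ratios $\Delta_{k-1}/\Delta_k$ is not an argument: each such ratio has $A$-order exactly $2$, independent of $n$, so the mechanism you point to does not by itself produce anything scaling with $n$. Extracting the full $4(n+1)$ gap from the Kauffman expansion of the $n^2$ mini-crossings would require a careful accounting of which cabled states survive the turnback annihilation and what their degrees are; this is precisely the heart of the lemma, and labelling it ``the main technical computation'' without carrying it out means the proposal does not establish the result.

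For orientation: the argument in the cited papers does not globally expand all $n^2$ mini-crossings. It works crossing-by-crossing, using a local skein identity (equivalently, the fusion expansion of an $n$-colored crossing as a sum of $H$-graphs with internal edge colored $2j$, $0\le j\le n$) in which the top term $j=n$ is exactly the $f^{(2n)}$ picture defining $S^{(n)}_B$. The coefficients of the lower terms carry explicit extra $A$-degree, and adequacy together with Lemma~\ref{replace} then controls the global minimum degree when the local replacement is made at every crossing. That localization is what makes the $4(n+1)$ bound tractable.
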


In \cite{Armond:HeadAndTailConjecture}, the first author found a lower bound for the minimum degree of any element of $S(S^3;R,A)$ which contains the Jones-Wenzl idempotent. Before we explain this bound, consider a crossing-less diagram $S$ in the plane consisting of arcs connecting Jones-Wenzl idempotents. We will define what it means for such a diagram to be adequate in much the same way that a knot diagram can be $A$- or $B$-adequate.

Construct a crossing-less diagram $\bar{S}$ from $S$ by replacing each of the Jones-Wenzl idempotents in $S$ by the identity of $TL_n$. Thus $\bar{S}$ is a collection of circles with no crossings. Consider the regions in $\bar{S}$ where the idempotents had previously been. $S$ is adequate if no circle in $\bar{S}$ passes through any one of these regions more than once. Figure \ref{fig:adequate} shows an example of a diagram that is adequate and Figure \ref{fig:inadequate} shows an example of a diagram that is not adequate. In both figures every arc is labelled $1$.

\begin{figure}[htbp]
	\centering
	\subfloat[An adequate diagram]{
	\includegraphics[width=2in]{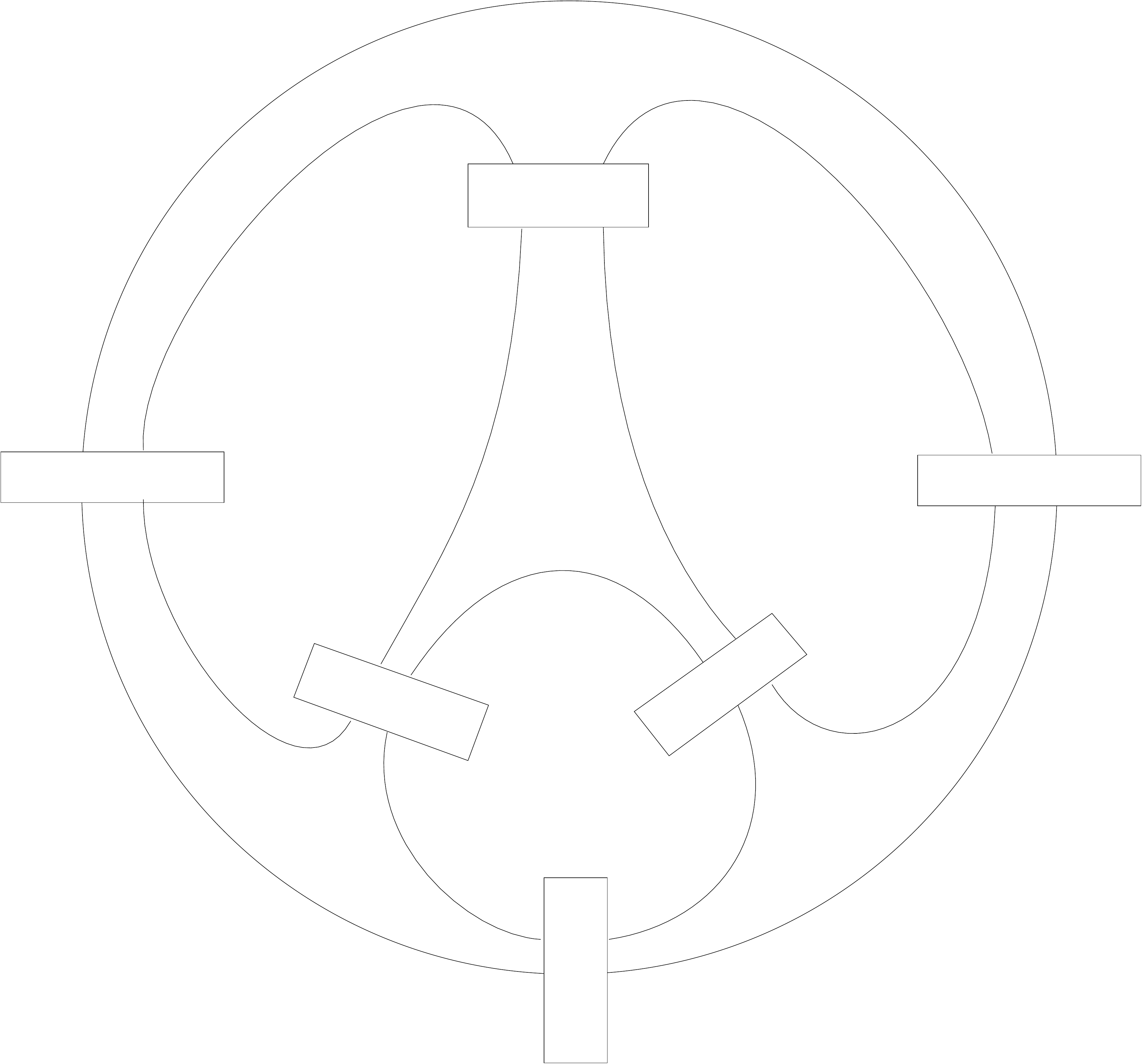}
	\label{fig:adequate}}
	\qquad
	\subfloat[An inadequate diagram]{
	 \includegraphics[width=2in]{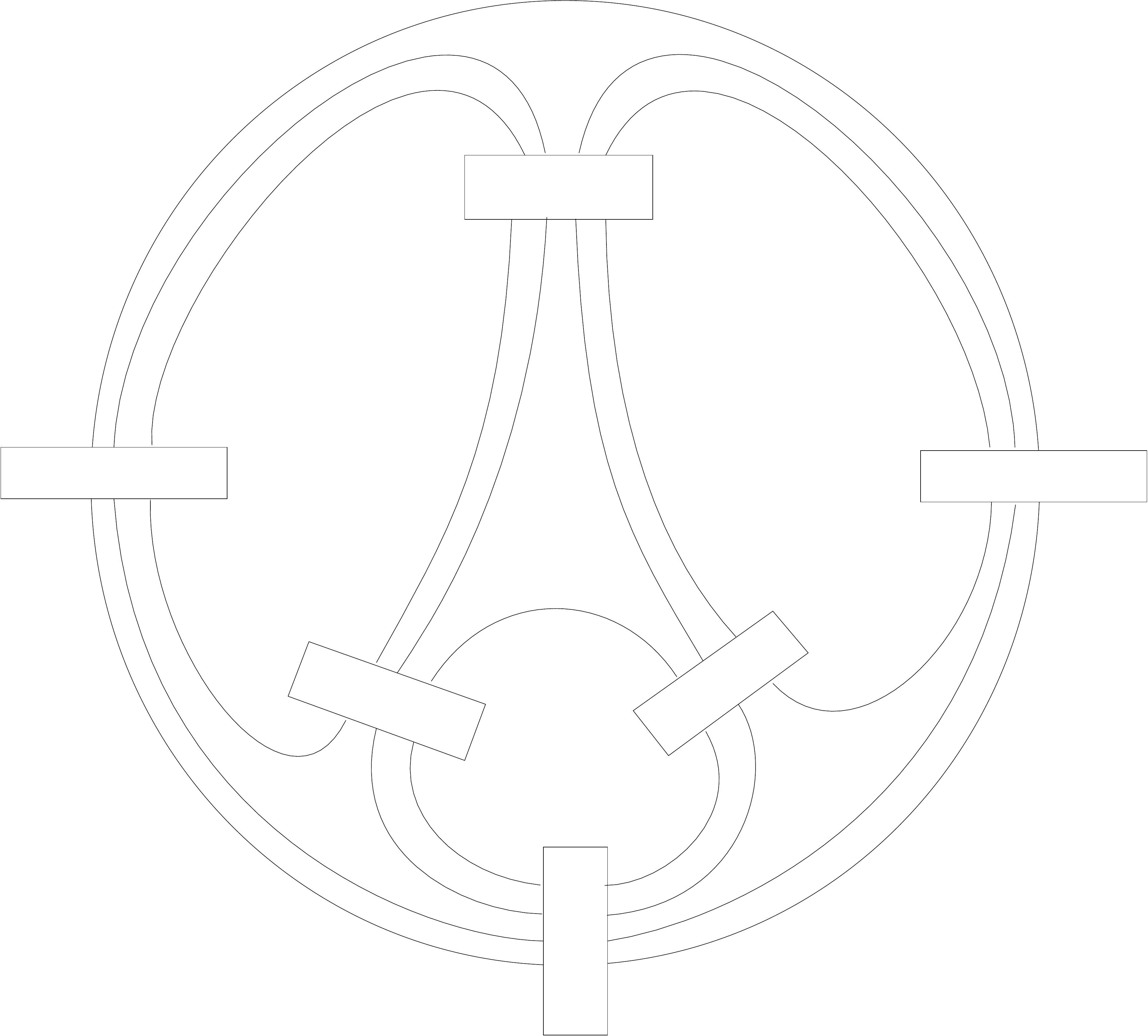}
	\label{fig:inadequate}}
	\caption{Example of adequate and inadequate diagrams}
	\label{fig:ExamplePiplup}
\end{figure}

If $S$ is adequate, then the number of circles in $\bar{S}$ is a local maximum, in the sense that if the idempotents in $S$ are replaced by other elements of $TLM_n$ such that there is exactly one hook total in all of the replacements, then the number of circles in this diagram is one less than the number of circles in $\bar{S}$. This is the key fact in the proof of the following lemma.

\begin{lem}[\cite{Armond:HeadAndTailConjecture}] 
\label{replace}
If $S \in S(S^3;R,A)$ is expressed as a single crossingless diagram containing Jones-Wenzl idempotents, then $d(S) \geq d(\bar{S})$.

If the diagram for $S$ is also an adequate diagram, then $d(S) = d(\bar{S})$.
\end{lem}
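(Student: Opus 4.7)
The plan is to expand each Jones-Wenzl idempotent in $S$ over the Temperley-Lieb monoid and control the minimum degree of each resulting summand. Writing $f^{(n)} = \sum_{M \in TLM_n} c_M M$ with $c_{1_n} = 1$, and substituting at each of the $k$ idempotents of $S$, one obtains
\[
S \;=\; \sum_{(M_1,\ldots,M_k)} \Big(\prod_i c_{M_i}\Big)\,(-A^2-A^{-2})^{c(M_1,\ldots,M_k)},
\]
where $c(M_1,\ldots,M_k)$ counts the closed loops of the resulting all-crossingless diagram evaluated in $S^3$. The all-identity tuple recovers $\bar S$ with loop count $c_0 := c(1_{n_1},\ldots,1_{n_k})$, so $d(\bar S)=-2c_0$.

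Two auxiliary bounds drive the argument. Let $h(M)$ denote the number of cups on the top side of $M$, so $h(1_n)=0$ and $h(M)\geq 1$ otherwise. From the Jones-Wenzl recursion together with the elementary computation $d(\Delta_{n-1}/\Delta_n)=2$, I would show by induction on $n$ that
\[
d(c_M) \;\geq\; 2\,h(M) \qquad\text{for every } M\in TLM_n.
\]
Separately, each individual hook at an idempotent window replaces two straight-through arcs of $\bar S$ by two cups on the boundary of the window; such a local swap can alter the global loop count by at most $\pm 1$. Summing over the hooks of all $M_i$ therefore gives
\[
c(M_1,\ldots,M_k) \;\leq\; c_0 + \sum_i h(M_i).
\]
Combining these two bounds, every summand has minimum degree at least
\[
2\sum_i h(M_i) \;-\; 2\Big(c_0+\sum_i h(M_i)\Big) \;=\; -2c_0 \;=\; d(\bar S),
\]
which proves $d(S)\geq d(\bar S)$.

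For the equality case, suppose $S$ is adequate and fix a nontrivial tuple with total hook count $h:=\sum_i h(M_i)\geq 1$. Applying the hooks one at a time, the first hook acts on the still-adequate diagram $S$ and by the stated local-maximality property removes exactly one loop, while each of the remaining $h-1$ hooks changes the loop count by at most $+1$. Hence $c(M_1,\ldots,M_k)\leq c_0-1+(h-1)=c_0+h-2$, and the summand's degree is at least $-2c_0+4$, strictly greater than $d(\bar S)$. The identity summand itself equals $(-A^2-A^{-2})^{c_0}$ with leading coefficient $\pm 1$, so it is the unique term of minimum degree and cannot be cancelled, giving $d(S) = -2c_0 = d(\bar S)$.

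The main obstacle is the coefficient bound $d(c_M)\geq 2\,h(M)$. The Jones-Wenzl recursion produces only single-hook corrections at each step, and careful bookkeeping is needed to show that iterating the recursion accumulates degree in the coefficient linearly in $h(M)$; different sequences of recursion steps can yield the same matching $M$, so one must check that the minimum degree is attained by the sequence involving the fewest hooks. Once this bound is secured, the geometric local-maximality fact cited from earlier delivers the adequate case cleanly.
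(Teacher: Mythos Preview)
The paper does not prove this lemma here; it is quoted from the first author's earlier work. Your overall strategy---expand each idempotent over $TLM_n$ and bound the degree of every summand---is natural, but the loop-count inequality
\[
c(M_1,\ldots,M_k)\;\le\;c_0+\sum_i h(M_i)
\]
is false, and this is where the argument breaks.

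The number of top-cups $h(M)$ does not bound the number of elementary swaps needed to pass from $1_n$ to $M$. Take a single idempotent $f^{(3)}$ and $M=e_1e_2$, whose matching is: top $1$--$2$ cupped, a through-strand from top $3$ to bottom $1$, and bottom $2$--$3$ capped; thus $h(M)=1$. Close up externally by the planar matching $P$ equal to $M$. Then $\bar S=P\cup 1_3$ is a single circle, so $c_0=1$, whereas $P\cup M$ consists of three circles. The loop count increases by $2>h(M)=1$, contradicting your bound. The lemma still holds in this example, but only because $d(c_{e_1e_2})=d(1/\Delta_2)=4$, strictly larger than $2h(M)=2$; your coefficient bound $d(c_M)\ge 2h(M)$ discards exactly the extra degree needed to compensate.

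The quantity that simultaneously controls the loop change and the coefficient degree is not $h(M)$ but, in effect, the minimal word length of $M$ in the generators $e_i$: each $e_i$ is a single swap (loop count changes by $\pm1$), and each application of the Jones--Wenzl recursion contributes one $e_i$ together with a factor $\Delta_{j-1}/\Delta_j$ of degree $2$. The cleanest repair is therefore to abandon the basis expansion and induct directly on the recursion $f^{(n)}=(f^{(n-1)}\otimes 1)-(\Delta_{n-2}/\Delta_{n-1})(\cdots)$: the first term has the same $\bar S$, while in the second term the hook alters the circle count of the barred diagram by at most one, offset by the degree-$2$ coefficient. Your adequate-case argument rests on the same flawed bound and needs the same correction.
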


\subsection{Proof}
We now prove the Main Theorem by using Lemma \ref{mainlemma} and showing that if two $A$-adequate link diagrams $D_1$ and $D_2$ have all-$A$ smoothings that differ as in Main Theorem~\ref{Main Theorem}, then $S^{(n)}_{B_1} \;\dot{=}_{4(n+1)}\; S^{(n)}_{B_2}$. First, an important lemma:

\begin{lem}
\label{junkterms}
$$\includegraphics[height=.5in]{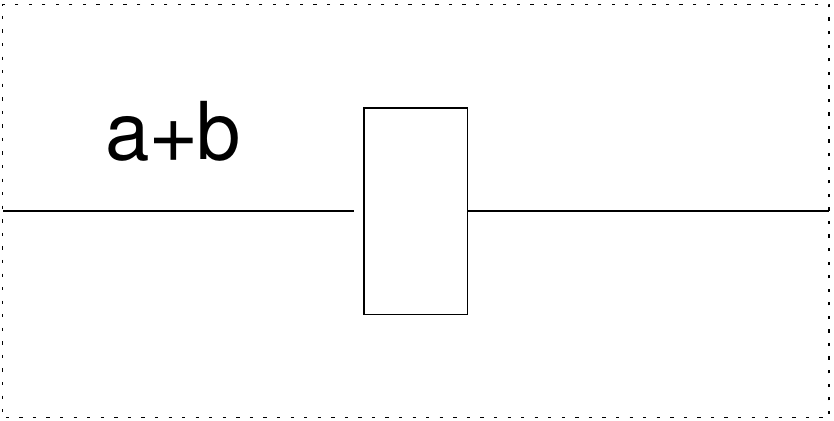} \raisebox{15pt}{$\; = \;$} \includegraphics[height=.5in]{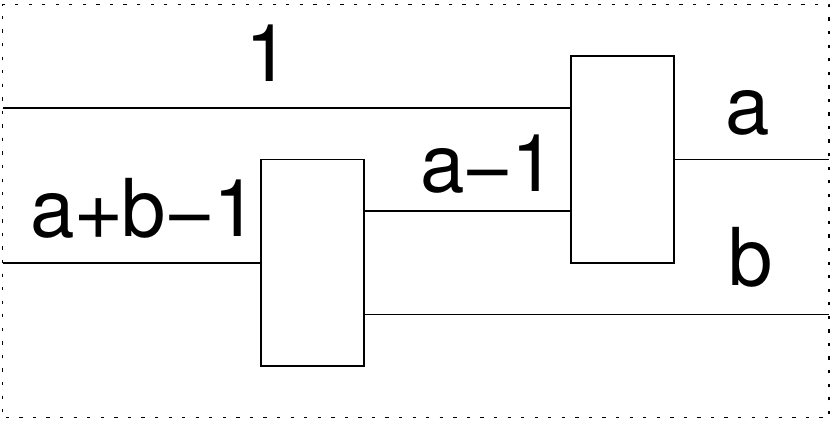} \raisebox{15pt}{$\; + \; (-1)^{a} \left(\frac{\Delta_{b-1}}{\Delta_{a+b-1}}\right)\;$} \includegraphics[height=.5in]{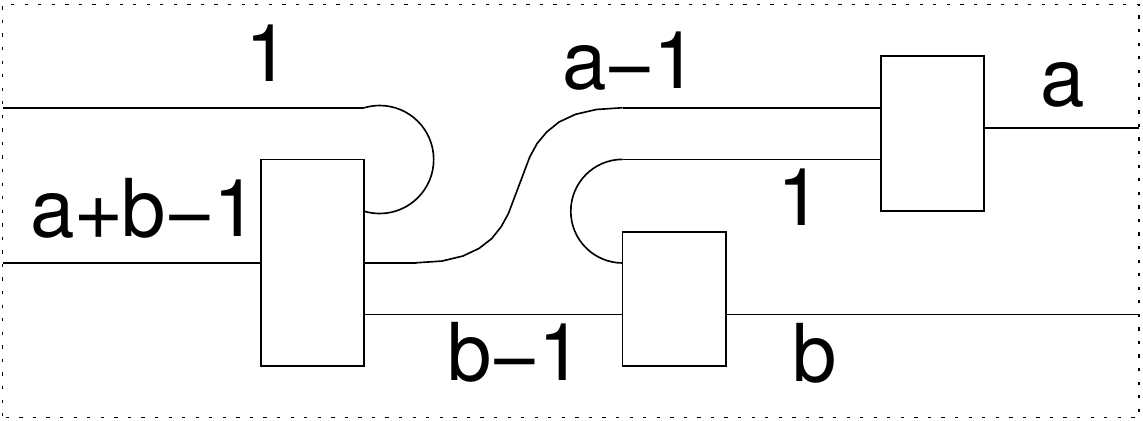}$$
\end{lem}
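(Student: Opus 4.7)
My plan is to prove this local skein identity by induction on $a$, using Wenzl's recursive description of the Jones--Wenzl idempotent $f^{(a+b)}$ as the main tool. The two terms on the right-hand side should be, respectively, the ``main'' diagram that results from fully absorbing an $f^{(a)}$-labeled bundle into the $f^{(a+b)}$ and the single turn-back correction picking up the quantum coefficient; so the identity is really a packaged version of iterating Wenzl's recursion $a$ times and collapsing the surviving junk using the defining properties of the idempotents.

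For the base case $a=1$, I would unfold Wenzl's recursion
$$f^{(b+1)} \;=\; f^{(b)} \otimes \mathrm{id} \;-\; \tfrac{\Delta_{b-1}}{\Delta_{b}}\,(f^{(b)} \otimes \mathrm{id}) \cdot U_b \cdot (f^{(b)} \otimes \mathrm{id}),$$
and observe that this agrees with the statement of the lemma (with $a=1$) under the identification of $U_b$ with the turn-back in Skeinlem3 and absorption of the extra $f^{(b)}$-boxes into the surrounding $f^{(a+b)}$-box in Skeinlem1. The sign is $(-1)^1$ and the coefficient is $\Delta_{b-1}/\Delta_{b} = \Delta_{b-1}/\Delta_{a+b-1}$ as claimed.

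For the inductive step, assuming the formula for $a-1$ (and all $b$), I would apply Wenzl's recursion once to peel a strand off the $f^{(a+b)}$ idempotent appearing in Skeinlem1. This produces (i) a term containing an $f^{(a+b-1)}$ to which I apply the induction hypothesis with parameters $(a-1, b)$, and (ii) an immediate turn-back term carrying coefficient $-\Delta_{a+b-2}/\Delta_{a+b-1}$. The $(a-1,b)$-expansion in (i) splits further into its own ``main'' and ``junk'' pieces. The main piece collapses to Skeinlem2. The three remaining junk contributions must combine into a single copy of Skeinlem3 with the stated coefficient: the cross-terms in which the two independent turn-backs meet the $f^{(a+b-1)}$-box are killed by the absorption relation $f^{(n)}U_i = 0$, so only the ``aligned'' turn-back survives, and a short telescoping computation
$$-\,\tfrac{\Delta_{a+b-2}}{\Delta_{a+b-1}} \cdot (-1)^{a-1}\tfrac{\Delta_{b-1}}{\Delta_{a+b-2}} \;=\; (-1)^a\,\tfrac{\Delta_{b-1}}{\Delta_{a+b-1}}$$
yields the advertised coefficient.

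The main obstacle is the bookkeeping: keeping track of which turn-back lands on which strand, and verifying that the mixed junk-on-junk configurations really are killed by idempotent absorption rather than contributing to the final coefficient. An alternative route, which I would keep in reserve if the induction becomes unwieldy, is to characterize both sides as the unique element of the appropriate Temperley--Lieb hom-space that is invariant under pre- and post-composition with $f^{(a)}$ and $f^{(b)}$ boxes, and to fix the leading coefficient by capping one strand and invoking $f^{(n)} U_i = 0$ together with the loop value $\Delta_{n+1}/\Delta_n$; this sidesteps the telescoping but pushes the work into identifying the correct characterizing property.
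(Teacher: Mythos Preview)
Your plan is essentially correct and follows the same core mechanism as the paper: iterate Wenzl's recursion on $f^{(a+b)}$ and watch the coefficients telescope to $(-1)^a\Delta_{b-1}/\Delta_{a+b-1}$, with the unwanted branches dying by idempotent absorption. The paper organizes this as a direct iteration rather than an induction on $a$, but that is only a cosmetic difference.

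There is, however, one genuine simplification in the paper that you do not use and that dissolves exactly the obstacle you flag. Before touching Wenzl at all, the paper uses the absorbing property $f^{(a+b)}(f^{(a)}\otimes\mathrm{id}_b)=f^{(a+b)}$ to insert an extra copy of $f^{(a)}$ on the $a$-strand bundle. With that box sitting there, every subsequent application of Wenzl to the junk term produces two summands, one of which has its hook landing on the $f^{(a)}$ and is therefore immediately zero. So the junk term simplifies by a clean one-term recursion
\[
\text{junk}_k \;=\; -\,\frac{\Delta_{a+b-2-k}}{\Delta_{a+b-1-k}}\;\text{junk}_{k+1},
\]
and after $a-1$ steps the product collapses to the stated coefficient. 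No cross-terms ever appear, so the ``bookkeeping of which turn-back lands on which strand'' that you identify as the main obstacle simply does not arise. Your induction on $a$ reaches the same endpoint, but the $f^{(a)}$-insertion trick is what makes the argument short.
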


\begin{proof}
First we use the idempotent property to create a smaller idempotent on $a$ strands.
$$\includegraphics[height=.5in]{Skeinlem1.pdf} \raisebox{15pt}{$\; = \;$} \includegraphics[height=.5in]{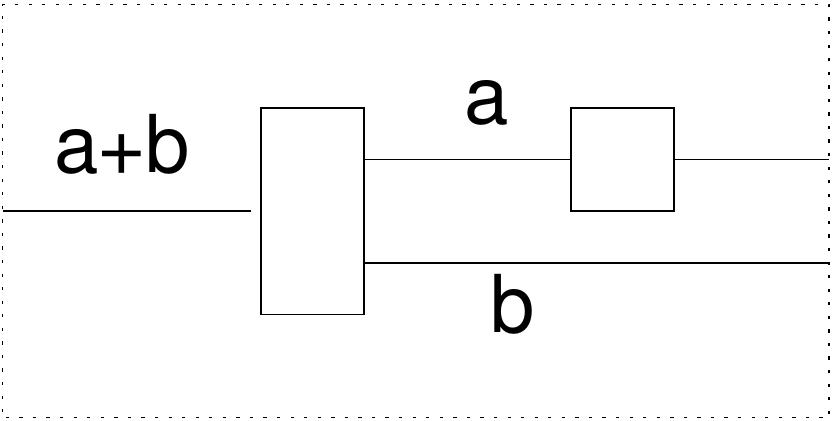}$$
Now we use the recursive relation on the larger idempotent.
$$\includegraphics[height=.5in]{SkeinlemP1.pdf} \raisebox{15pt}{$\; = \;$} \includegraphics[height=.5in]{Skeinlem2.pdf} \raisebox{15pt}{$\; - \left(\frac{\Delta_{a+b-2}}{\Delta_{a+b-1}}\right)\;$} \includegraphics[height=.5in]{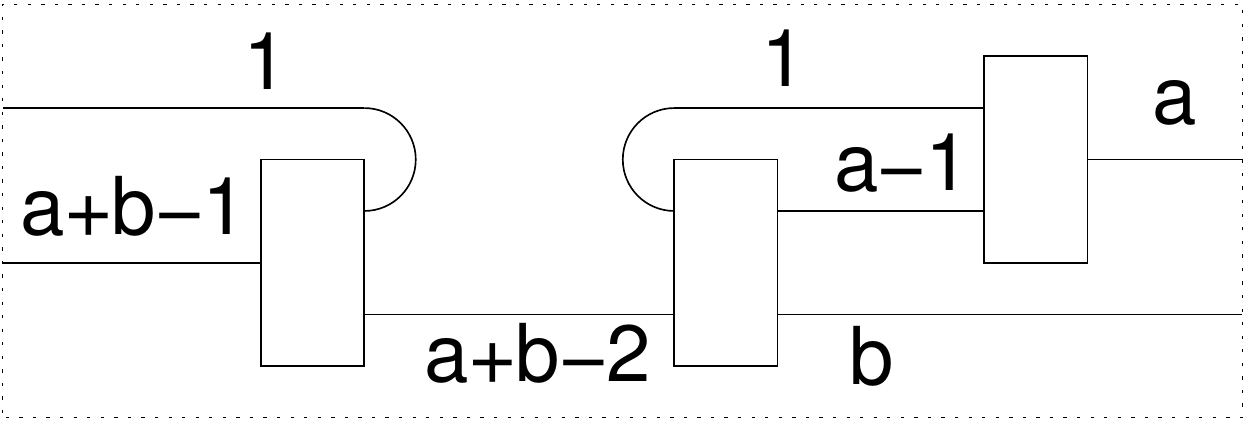}$$
If $a-1>0$, notice that when applying the recursive relation again on the right-most term, one of the two terms in the relation will be zero.
$$\includegraphics[height=.5in]{SkeinlemP2.pdf} \raisebox{15pt}{$\; = \;$} \includegraphics[height=.5in]{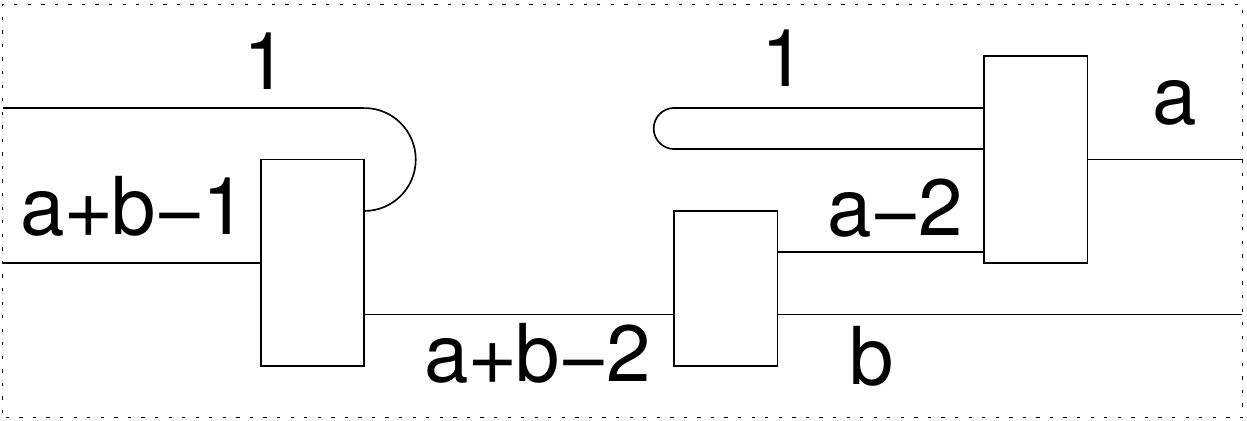} \raisebox{15pt}{$\; - \left(\frac{\Delta_{a+b-3}}{\Delta_{a+b-2}}\right)\;$} \includegraphics[height=.5in]{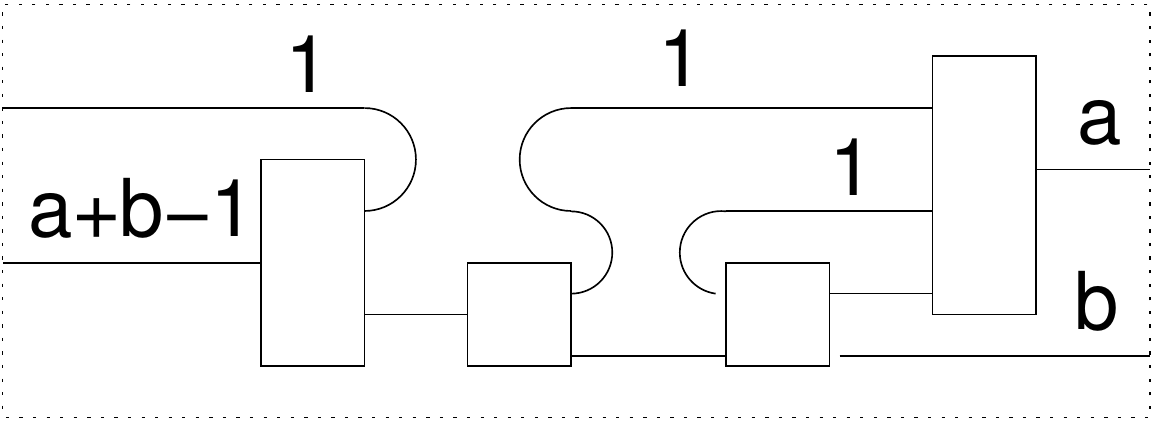}$$
Thus we get a simplification:
$$\includegraphics[height=.5in]{SkeinlemP2.pdf} \raisebox{15pt}{$\; = \;$}  \raisebox{15pt}{$\; - \left(\frac{\Delta_{a+b-3}}{\Delta_{a+b-2}}\right)\;$} \includegraphics[height=.5in]{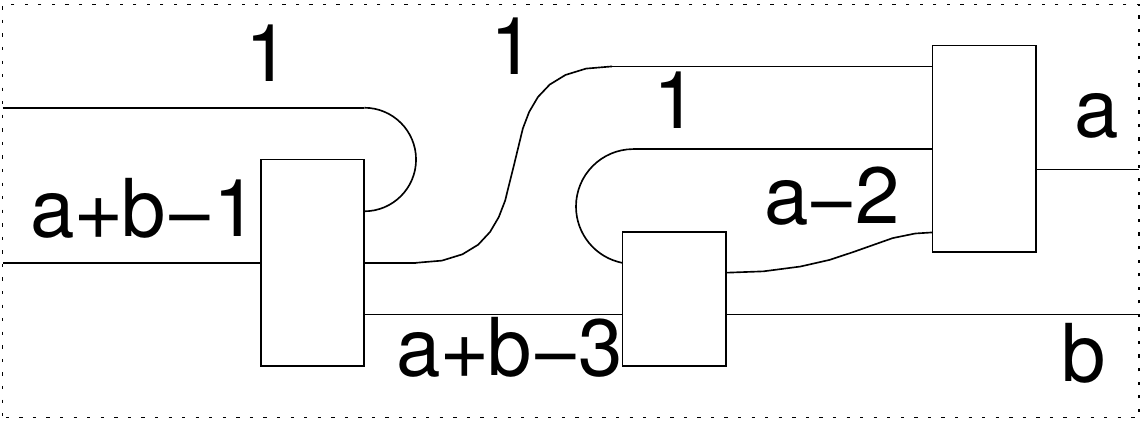}$$
By performing the recursion a total of $k$ times where $k \leq a-1$, we get the following equation:
$$\includegraphics[height=.5in]{SkeinlemP2.pdf} \raisebox{15pt}{$\; = \;$}  \raisebox{15pt}{$\; (-1)^k \left(\frac{\Delta_{a+b-2-k}}{\Delta_{a+b-2}}\right)\;$} \includegraphics[height=.5in]{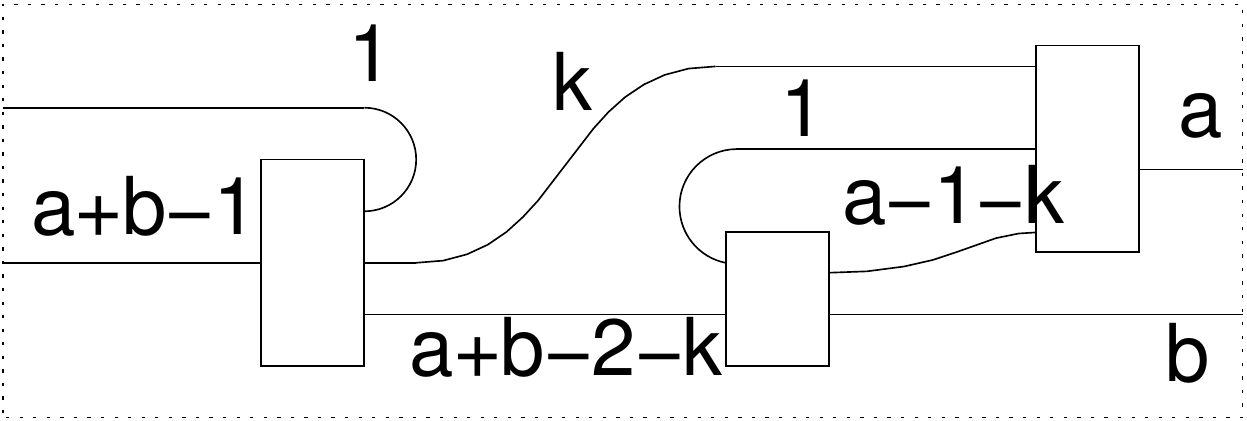}$$
Letting $k = a-1$, the lemma follows.\end{proof}

\begin{lem}
\label{big}

Given an element $C$ of $S(D^3;R,A)$ with $6$ points on the boundary colored $n$ such that the pairings of the $C$ with the left and right hand sides of the following equation are adequate diagrams in the plane, then we get the equation
$$\includegraphics[height=.5in]{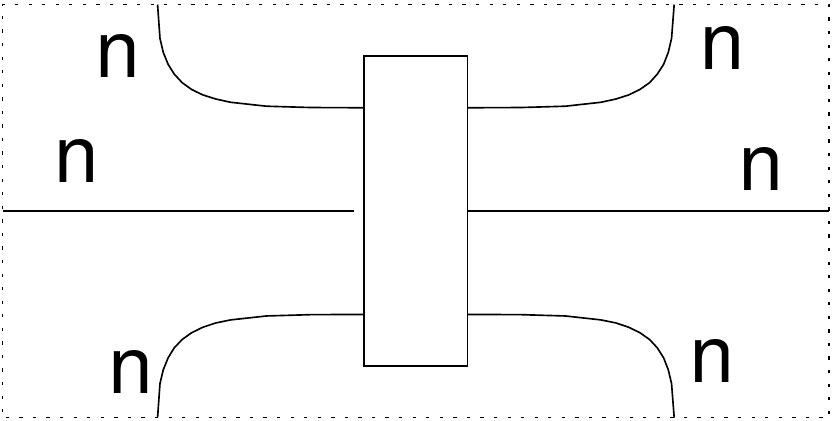} \raisebox{15pt}{$\; \dot{=}_{4(n+1)} \;$} \includegraphics[height=.5in]{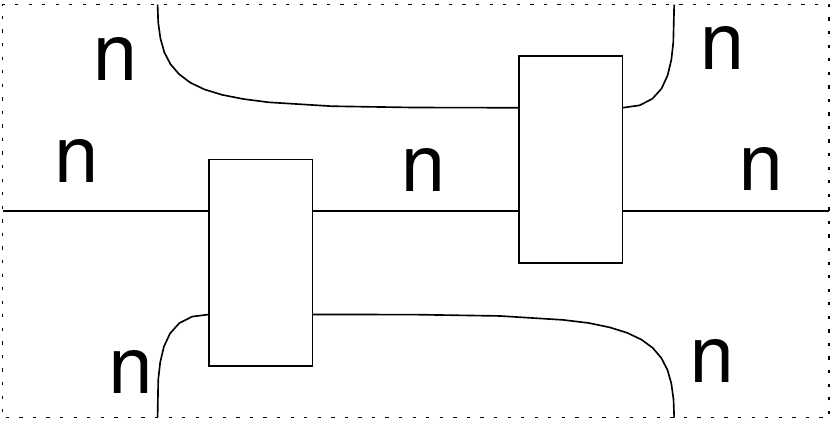} $$
where this equation and the equations appearing in the following proof are actually equations between the pairing of each term with the element $C$. That is the diagrams in this lemma are local pictures of diagrams in $S(S^3;R,A)$.
\end{lem}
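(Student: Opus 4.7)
The plan is to establish the congruence by applying Lemma \ref{junkterms} to one of the Jones--Wenzl idempotents in the local picture, matching the principal terms of the two resulting expansions, and then bounding the correction terms in $d$-degree using Lemma \ref{replace}. The hypothesis that both pairings with $C$ are adequate is what makes Lemma \ref{replace} sharp for the principal parts, so that the principal terms control the $d$-value of each side exactly while every correction is strictly higher order.

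First I would expand the $n$-colored idempotent appearing at the site where the two local pictures differ, using Lemma \ref{junkterms}. This writes the left-hand side as a principal term plus a correction term multiplied by the scalar $(-1)^a \Delta_{b-1}/\Delta_{a+b-1}$, where $a$ and $b$ are determined by how the strands are partitioned by the local move. I would then apply the same expansion on the right-hand side. The principal terms obtained from each side reduce to the same underlying crossingless diagram after absorbing the partial identity, because once the idempotent is ``broken'' along the split indicated by Lemma \ref{junkterms} the two local configurations become planar-isotopic inside the ball $D^3$ and thus give equal elements of $S(S^3;R,A)$ when paired with $C$.

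Next I would bound the correction terms. Each correction introduces a short turnback adjacent to the former location of the expanded idempotent, which short-circuits the local picture. When paired with $C$ and with idempotents replaced by identities in the sense of Lemma \ref{replace}, this short-circuiting forces the resulting collection of circles $\overline{S_{\mathrm{corr}}}$ to have strictly fewer components than $\overline{S_{\mathrm{princ}}}$ — this is precisely where adequacy of the original pairings is used, since it guarantees that the principal configuration achieves the maximal number of circles in Lemma \ref{replace}. Each lost circle raises $d$ by $2$ (because $-A^2 - A^{-2}$ has $d$-value $-2$), and the scalar $(-1)^a \Delta_{b-1}/\Delta_{a+b-1}$ has $d$-value $2a$, so the combined gap between the correction and the principal term is at least $4(n+1)$.

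The main obstacle will be the bookkeeping in the last step: one must verify that the circle counts really do drop by enough in every correction term produced by (possibly iterated) applications of Lemma \ref{junkterms}, so that after multiplying by the scalar prefactor the corrections lie in $A^{4(n+1)} \cdot \Q[[A]]$. The adequacy hypothesis on both pairings with $C$ is exactly the leverage needed: without it the principal term would not be maximal, and the argument that the corrections strictly increase $d$ would fail. With the counting in hand, the congruence $\;\dot{=}_{4(n+1)}$ follows immediately from the matching of principal parts together with the degree bound on the corrections.
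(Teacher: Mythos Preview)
Your proposal has the right ingredients---Lemma~\ref{junkterms} for the expansion, Lemma~\ref{replace} for the degree bound, adequacy for sharpness---but the assembly is not correct as written, and the gap is not just bookkeeping.

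The paper's argument is an induction on a parameter $k$ with $0\le k<n$, applying Lemma~\ref{junkterms} \emph{once per step} with $a=2n-k$ and $b=n$. Each step slides a single strand across the central idempotent, producing an intermediate diagram that is still adequate; after $n$ steps one reaches the right-hand side of the equation from the (symmetric) left-hand side. Your proposal instead applies Lemma~\ref{junkterms} once to each side and asserts that ``the principal terms obtained from each side reduce to the same underlying crossingless diagram.'' That is not true after a single application: the two principal terms are genuinely different skein elements until all $n$ strands have been moved. The matching of principal parts is the output of the induction, not something available at the first step.

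The degree estimate also falls short. You write that the scalar has $d$-value $2a$ and that ``each lost circle raises $d$ by $2$,'' and then claim the combined gap is at least $4(n+1)$. But with a single application at color $n$ (say $a=b=n$) and the naive adequacy argument that one circle is lost, you get only $2n+2=2(n+1)$, half of what is needed. The paper's proof works because at step $k$ the turnback produced by Lemma~\ref{junkterms} merges $2n-k+1$ distinct circles of the adequate replacement $\bar D_2$ into a single circle (see Figure~\ref{merged} in the paper), so the circle count drops by $2n-k$, contributing $2(2n-k)$ to the $d$-gap. Together with $d(\Delta_{n-1}/\Delta_{3n-k-1})=4n-2k$ this gives $8n-4k\ge 4(n+1)$ since $k\le n-1$. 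This large drop in circle count is a feature of the specific local picture with $a=2n-k$ strands, not a generic consequence of adequacy, and your sketch does not account for it.
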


\begin{remark}
The abuse of notation in this lemma is unambiguous because if each term is viewed as being an element of $S(D^3;R,A)$, then the equations do not make sense as the equivalence $\dot{=}_{4(n+1)}$ only applies to Laurent polynomials and Laurent series which elements of $S(D^3;R,A)$ with colored points on the boundary are not. Thus the equations only make sense if the terms actually represent elements of $S(S^3;R,A)$.
\end{remark}

\begin{proof} 
For any $k$ with $0 \leq k < n$, cosider the relation coming from Lemma~\ref{junkterms} with $a=2n-k$ and $b=n$:
$$\includegraphics[height=.5in]{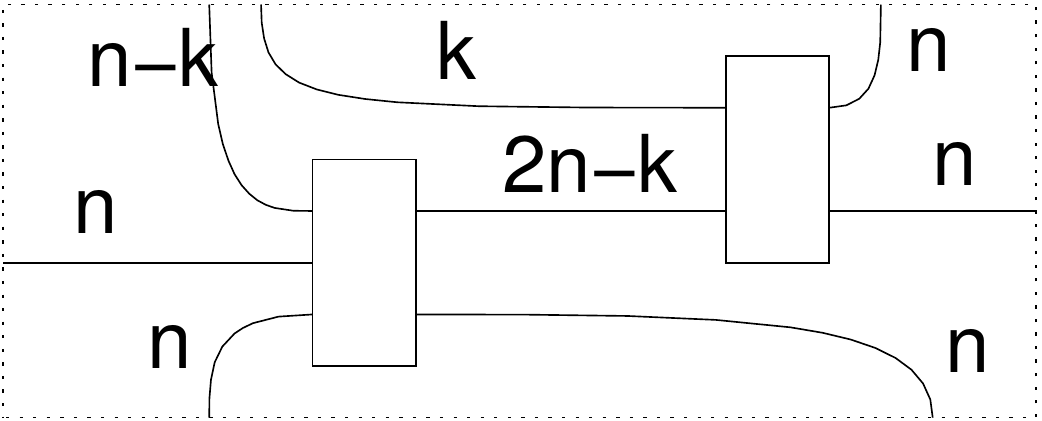} \raisebox{15pt}{$\; = \;$} \includegraphics[height=.5in]{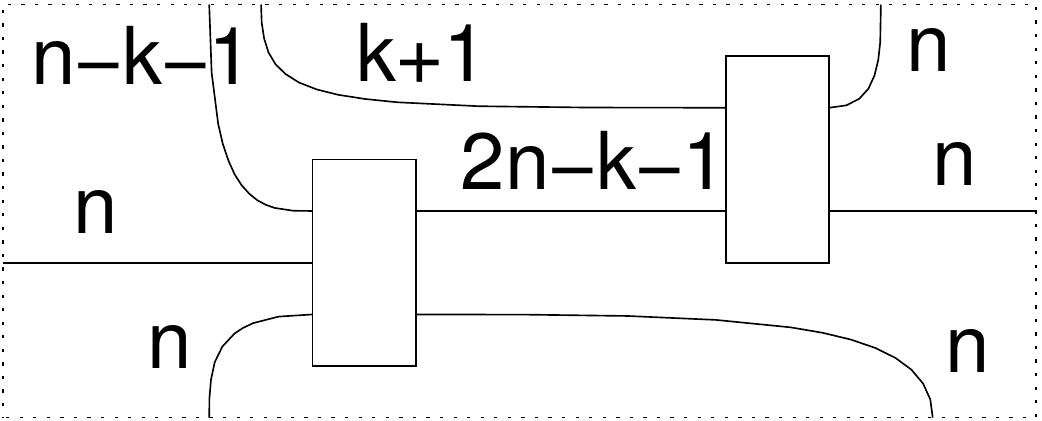} \raisebox{15pt}{$\; + \; (-1)^{k} \left(\frac{\Delta_{n-1}}{\Delta_{3n-k-1}}\right)\;$} \includegraphics[height=.5in]{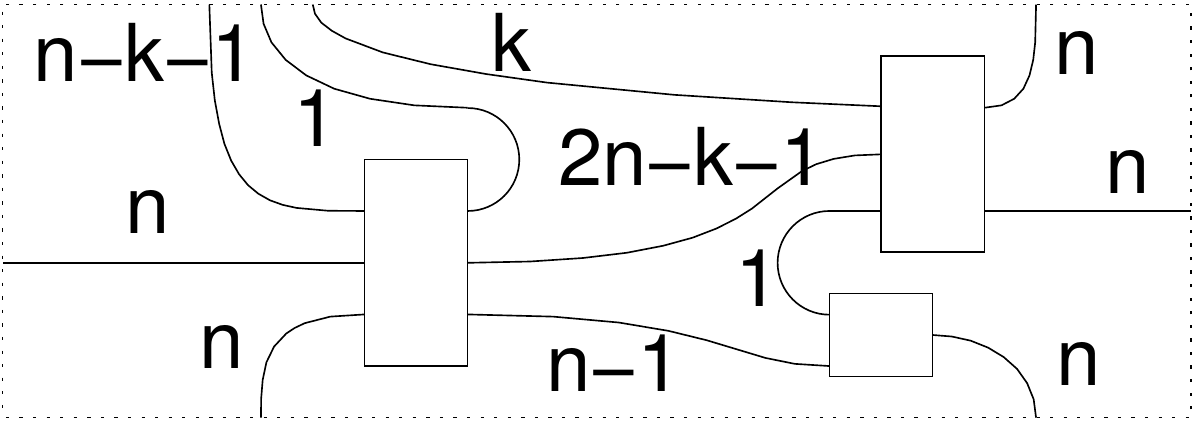}$$
Note that the two pictures on the left, call them $D_1$ and $D_2$, are both adequate diagrams and that their minimum degrees are the same by Lemma~\ref{replace}. Now we need to compare this with the minimum degree of the right-most term, call it $D_3$. First note the number of circles in $\bar{D_3}$ are $2n-k$ fewer than the number of circles in $\bar{D_2}$. This is because the diagrams $\bar{D_2}$ and $\bar{D_3}$ differ in only one spot where strands running straight across in $\bar{D_2}$ are replaced by a diagram as in Figure~\ref{merged}. 
\begin{figure}[htbp] %
   \centering
   $\includegraphics[height=1in, angle=90]{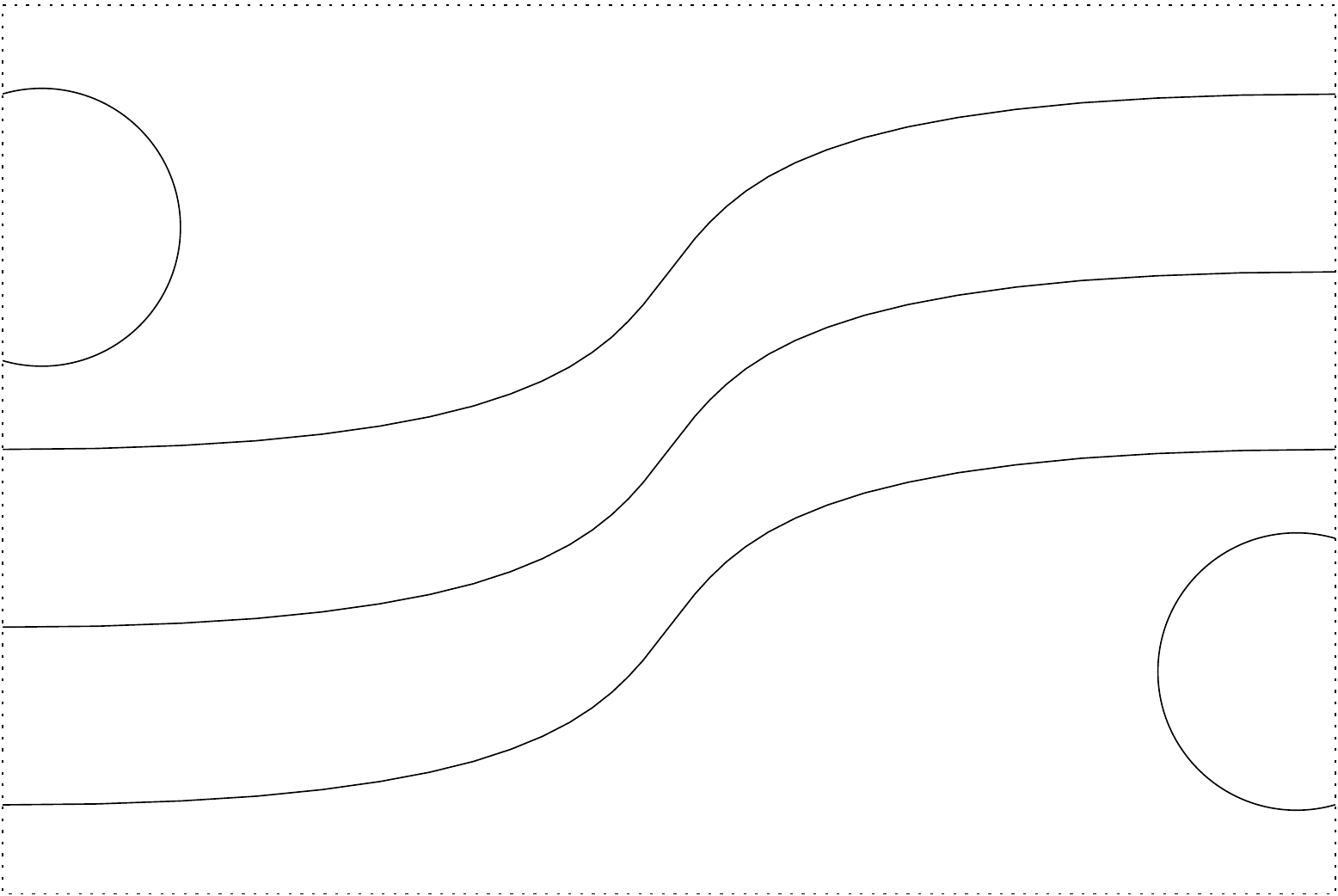} \raisebox{20pt}{$\;= \;$}\includegraphics[height=1in, angle=90]{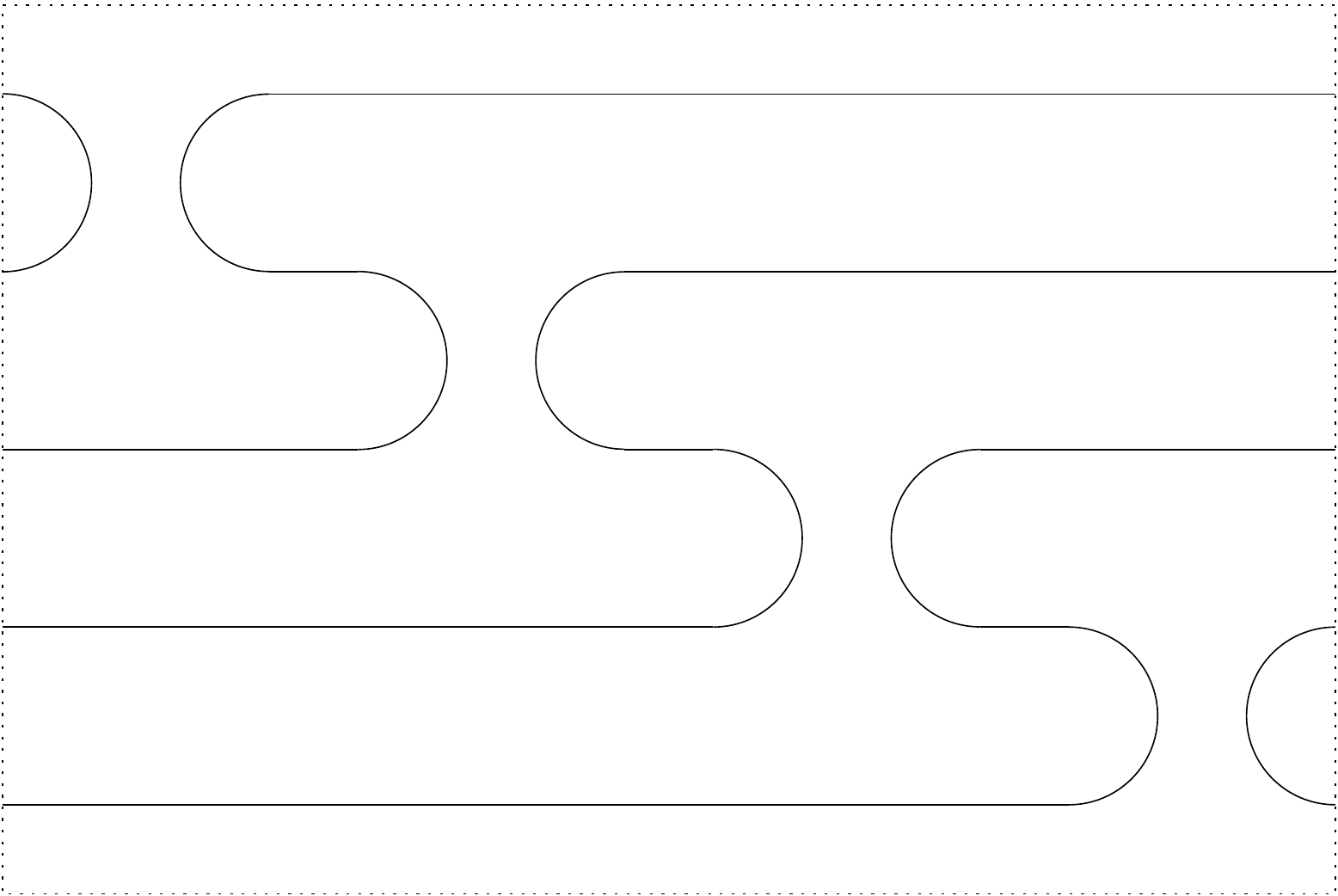}$
   \caption{$2n-k+1$ circles merged into $1$}
   \label{merged}
\end{figure}
Because $D_2$ is adequate, the different strands in Figure~\ref{merged} are closed to form $2n-k+1$ different circles. However, the corresponding strands in $D_3$ are merged in such a way that they close to form a single circle. Thus
$$d \left(\raisebox{-15pt}{\includegraphics[height=.5in]{SkeinPfP3.pdf}}\right) \; \geq \; d \left(\raisebox{-15pt}{\includegraphics[height=.5in]{SkeinPfP2.pdf}}\right) \; + 4n - 2k$$
Also note that 
$$d\left(\frac{\Delta_{n-1}}{\Delta_{3n-k-1}}\right) = 4n-2k$$
Finally, note that because $k<n$, we have $4(2n-k)\geq 4(n+1)$. Therefore:
$$\includegraphics[height=.5in]{SkeinPfP1.pdf} \raisebox{15pt}{$\; \dot{=}_{4(n+1)} \;$} \includegraphics[height=.5in]{SkeinPfP2.pdf}$$

Now by induction on $k$, this completes the proof.\end{proof}

The final step to prove Main Theorem~\ref{Main Theorem} is to verify the claim mentioned at the beginning of the section that if two $A$-adequate link diagrams $D_1$ and $D_2$ have all-$A$ smoothings that differ as in Main Theorem~\ref{Main Theorem}, then $S^{(n)}_{B_1} \;\dot{=}_{4(n+1)}\; S^{(n)}_{B_2}$. This follows from Lemma~\ref{big}. Denote the figure on the right of the equation in Lemma~\ref{big} $S^{(n)}_{B_2}$ because it matches the diagram on the right of the equation in Main Theorem~\ref{Main Theorem}. To get an expression for $S^{(n)}_{B_1}$ we can reflect all diagrams in Lemma~\ref{big} horizontally. Because the diagram on the left of the equation is symmetric, this shows that $S^{(n)}_{B_1} \;\dot{=}_{4(n+1)}\; S^{(n)}_{B_2}$, and Main Theorem~\ref{Main Theorem} follows.

\bibliography{AdequateHeads}
\bibliographystyle {amsalpha}

\end{document}